\newtheorem{remark}{Remark}[section]
\newtheorem{theorem}{Theorem}[section]
\newtheorem{lemma}[theorem]{Lemma}
\newtheorem{assumption}{Assumption}[section]
\theoremstyle{definition}
\numberwithin{equation}{section}
\newcommand{\bn}{\boldsymbol{n}}
\newcommand{\Om}{{\Omega} }
\newcommand{\Oml}{{\Omega^{+}} }
\newcommand{\Omr}{{\Omega^{-}} }
\newcommand{\Omlh}{{\Omega^{+}_h} }
\newcommand{\Omrh}{{\Omega^{-}_h} }
\newcommand{\cOmlh}{{\hat{\Omega}^{+}_h} }
\newcommand{\cOmrh}{{\hat{\Omega}^{-}_h} } 
\newcommand{\betal}{{\beta^{+}} }
\newcommand{\betar}{{\beta^{-}} }
\newcommand{\Vhl}{V_h^+} 
\newcommand{\Vhlg}{V_{g,h}^+} 
\newcommand{\Vhr}{V_h^-} 
\newcommand{\Qhr}{Q_h^-} 
\newcommand{\VP}{V_{h}^-} 
\newcommand{\VL}{V_h^+} 
\newcommand{\VLg}{V_{\rm g, P_1}^+} 
\newcommand{\PiL}{\Pi_{\rm P_1}}
\newcommand{\VRT}{Q_h^-} 
\newcommand{\PiRT}{\Pi_{\rm RT}}  
\newcommand{\PiRTnew}{\Pi_{\rm RT}^*}
\newcommand{\Vr}{V^-}
\newcommand{\Vl}{V^+}
\newcommand{\Hdiv}{Q^-}
\newcommand{\bmx}{\boldsymbol{x}}
\newcommand{\bmn}{\boldsymbol{n}}
\newcommand{\bmp}{v}
\newcommand{\fr}{f^-} 
\newcommand{\fl}{f^+}
\newcommand{\ul}{u^+}
\newcommand{\vl}{v^+} 
\newcommand{\ur}{u^-}
\newcommand{\vr}{v^-} 
\newcommand{\ulh}{u_h^+}
\newcommand{\vlh}{v_h^+} 
\newcommand{\urh}{u_h^-}
\newcommand{\vrh}{v_h^-} 
\newcommand{\urhp}{\hat u_h^-}
\newcommand{\taur}{\tau^-}
\newcommand{\sigmar}{\sigma^-}
\newcommand{\sigmal}{\sigma^+}
\newcommand{\tauh}{\tau_h^-}
\newcommand{\sigmah}{\sigma_h^-}
\newcommand{\Pih}{\Pi_h^0}
\newcommand{\dx}{\,{\rm dx}}
\newcommand{\ds}{\,{\rm ds}}
\newcommand{\neout}{{n_o}}
\newcommand{\Th}{\mathcal{T}_{h}}
\newcommand{\G}{\Gamma}
\newcommand{\Gh}{{\Gamma_h}}
\newcommand{\cT}{\mathcal{T}}
\newcommand{\Div}{{\nabla\cdot\,}}
\begin{document}

\title{A direct finite element method for  elliptic interface problems} 

\author{Jun Hu}
\address{LMAM and School of Mathematical Sciences, Peking University, Beijing 100871, People's Republic of China. hujun@math.pku.edu.cn} 

\author{Limin Ma}
\address{School of Mathematics and Statistics, Wuhan University, Wuhan  430072, People's Republic of China. limin18@whu.edu.cn} 
 
\maketitle
\begin{abstract}
In this paper, a direct finite element method is proposed for solving interface problems on unfitted meshes. This new method treats the two interface conditions as an $H^{\frac12}(\G)\times H^{-\frac12}(\G)$ pair for the mutual interaction across the interface, rather than the jumps of variables.  A simple and straightforward finite element method is proposed based on this approach. This method solves the interface problem using conforming finite elements in one subdomain and conforming mixed finite elements in the other, with a natural integral term accounting for mutual interaction.  Under reasonable assumptions, this direct finite element method is proved to be well-posed with an optimal a priori error analysis. Moreover, a simple lowest-order direct finite element method, using the linear element and the lowest-order Raviart-Thomas element, is analyzed to achieve the optimal a priori error estimate by verifying the aforementioned assumptions. Numerical tests are provided to confirm the theoretical results and the effectiveness of the direct finite element method.

  \vskip 15pt

\noindent{\bf Keywords. }{unfitted finite element method, interface problem, a priori analysis}

 \vskip 15pt

\noindent{\bf AMS subject classifications.}
    { 65N30}
    \end{abstract}

\section{Introduction} 
Consider the following elliptic interface problem  
\begin{align}
\label{source} 
&- \nabla \cdot(\beta\nabla u)= f & \mbox{ in }\Omega=\Oml\cup \Omr,\\ 
\label{jumpcon}
&[u]=0,\quad \left[\beta\nabla u\cdot\bmn\right]=0 & \mbox{ across }\Gamma,\\
\label{bdcon}
&u= g ,   &\mbox{ on }\partial\Omega,	
\end{align}
where $\Om\subset \mathbb{R}^2$ is a bounded Lipschitz domain, $f\in L^2(\Om)$, $g\in H^{\frac12}(\partial \Omega)$, and $\Gamma=\partial \Oml\cap \partial\Omr$ is a Lipschitz interface dividing $\Om$ into two non-intersecting subdomains $\Oml$ and $\Omr$. Here $\bmn$ denotes the unit outer normal to $\Omr$, and $[v]|_\G=v|_\Oml-v|_\Omr$ represents the jump of a function $v$ across the interface $\G$. The diffusion coefficient $\beta$  is assumed to be piecewise constant:
$$
\beta=\begin{cases}
\betal,&(x,y)\in \Oml\\
\betar,&(x,y)\in \Omr
\end{cases},\quad \mbox{with}\quad \min\{\betal,\betar\}>0.
$$
Interface problems with discontinuous coefficients frequently arise in material sciences and fluid dynamics, such as in the porous media equations in oil reservoirs. Numerical solutions of these interface problems have been extensively studied, see \cite{Li1998TheII,hansbo2002an,Hansbo2012ACF,Hansbo2004AFE,Hu2020FiniteEM,Xu2008UNIFORMCM,burman2018robust,Kergrene2016StableGF,Guzmn2015OnTA,Adjerid2015AnID,chen2017an,Zi2003NewCE,li2003new,Belytschko1999ElasticCG,babuska1970the,chen1998finite,xu1982estimate,guyomarch2009a,huynh2013a,barrett1987fitted,hu2021optimal}.

According to the topological relation between discrete grids and the interface, finite element methods for interface problems are divided into two main categories: interface-fitted finite element methods and interface-unfitted finite element methods.
Numerical methods using body-fitted meshes are well-studied for various interface problems and achieve optimal or nearly optimal convergence rates for arbitrarily shaped interfaces, as discussed in  \cite{babuska1970the,barrett1987fitted,chen2017an,chen1998finite,guyomarch2009a,Hu2020FiniteEM,hu2021optimal,huynh2013a,Xu2016OptimalFE,wieners1998coupling,2013Weak,2025The,guo2019Improved}. 
On the other hand, various unfitted finite element methods, where elements are allowed to intersect the interface, have been developed to avoid the complexity of generating interface-fitted meshes. Most of these methods handle the interface conditions \eqref{jumpcon} as jump conditions of  the solution $u$ and the flux $\beta\frac{\partial u}{\partial \bmn}$. Broadly speaking, two main approaches are used to manage these jump conditions.  One approach modifies the basis functions on elements that intersect the interface to ensure a satisfaction of the jump conditions in an $H^1$ sense within the finite element solutions. This methodology is adopted in immersed finite element methods, which were first proposed in \cite{li1998immersed} and later widely studied in \cite{Adjerid2015AnID,chen2009the,gong2008immersed,gong2009immersed,Li1998TheII,li2006the,li2003new,2024A,guo2023Solving,ji2023immersed} and references therein, and also applied to virtual element methods \cite{wang2023a}. The other approach employs interior penalty or Nitsche’s trick in \cite{nitsche1971ber} to penalize the jump of double-valued functions across the interface in the $L^2$-norm. A typical application of this approach is the cut finite element method, which weakly enforces interface conditions by introducing penalty terms on interface elements where degrees of freedom are double defined  \cite{burman2018robust,burman2010fictitious,burman2012fictitious,2022An,2024Unfitted,hansbo2002an}. The cut finite element method is also highly compatible with other methods, for example discontinuous Galerkin methods \cite{casagrande2016DG,wang2013hybridizable}, adaptive techniques \cite{chen2021adaptive}, and mesh generation algorithms \cite{chen2023arbitrarily,chen2024arbitrarily}.


In this paper, a direct finite element method (DiFEM for short hereinafter) is proposed for solving interface problems on unfitted meshes based on a coupled weak formulation. This formulation  interprets the two interface conditions \eqref{jumpcon} as an $H^{\frac12}(\G)\times H^{-\frac12}(\G)$ pair,  rather than jump conditions of variables. The interaction between the two subdomains is conducted by taking the value of~$u$ in $\Oml$ as a Dirichlet boundary condition for the equation in $\Omr$, and the value of $\beta \frac{\partial u}{\partial \bmn}$ in $\Omr$ as a Neumann boundary condition for the equation in $\Oml$. DiFEM solves the problem on $\Oml$ by conforming finite elements, and the problem on $\Omr$ by conforming mixed finite elements, where both elements are defined on elements crossing the interface. This requires the application quadrature rules and leads to consistency error, but avoids the modification of basis function spaces and also the penalization on jumps. Under reasonable assumptions regarding the discrete spaces and quadrature rules, DiFEM results in a well-posed symmetric saddle point system, and the well-posedness and an optimal a priori analysis are analyzed. Moreover, DiFEM can be applied directly to interface problems with non-homogeneous interface conditions.

A simple lowest-order DiFEM is proposed by using the linear element  for the primal form and the lowest-order Raviart-Thomas element for the mixed form.  
To ensure the critical inf-sup condition, a new Raviart-Thomas interpolation is designed by modifying the degrees of freedom on edges that are not interior to \( \Omega^- \). This new interpolation is proved to be bounded and  preserve the critical commuting property even on intersecting elements. By verifying the assumptions mentioned above, the well-posedness and an optimal a priori analysis for the lowest-order DiFEM are proved. 


The rest of this paper is organized as follows. Later in this section, we introduce the necessary notations and preliminaries. In Section \ref{sec:DiFEM}, the DiFEM is proposed, and proved to be well-posed and admit optimal convergence under certain assumptions. In Section \ref{sec:linear}, a simple lowest-order DiFEM is proposed, and an optimal a priori error estimate is analyzed. Finally, in Section \ref{sec:numerical}, numerical experiments are provided to verify the theoretical results and demonstrate the effectiveness of the proposed method.

Given a nonnegative integer $k$ and a bounded region $G\subset \mathbb{R}^2$, let  $H^k(G,\mathbb{R})$, $\| \cdot \|_{k,G}$, $|\cdot |_{k,G}$ and $(\cdot, \cdot)_G$  denote the usual Sobolev spaces, norm, semi-norm, and  the standard $L^2$ inner product over region $G$, respectively. For any curve $C$, let  $\langle \cdot, \cdot\rangle_C$ be the duality between $H^\frac12(C)$ and  $H^{-\frac12}(C)$, and reduce to the integral when the functions are piecewise polynomials. Let 
$$
H^k(\Oml\cup \Omr)=\{u: u|_{\Oml}\in H^k(\Oml),\quad  u|_{\Omr}\in H^k(\Omr)\}.
$$
For any given function $v$ on $\Om$, add a superscript `+' or `-' to represent the restriction of $v$ to $\Oml$ or $\Omr$, respectively, that is 
$
\vl=v|_\Oml$, $\vr=v|_\Omr.
$
By the extension theorem for Sobolev spaces, there exists $\tilde v^+\in H^2(\Om)$ such that  $\tilde v^+|_\Oml=\vl$ and $\|\tilde v^+\|_{1,\Om}\lesssim \|\vl\|_{1,\Oml}$ suppose that $\vl\in H^1(\Oml)$. We can also define $\tilde u^-$, $\tilde \sigma^-$ and $\tilde f^-$ in the same way. For the ease of presentation, we will omit the tilde  of all these variables in this paper.

Suppose that $\Om$ is a convex polygonal domain  in $\mathbb{R}^2$. For a triangulation $\Th$ of domain $\Om$, let $|K|$  and $h_K$ be the area and the diameter of an element $K\in\Th$, respectively, and  $h=\max_{K\in\Th}h_K$.  For each element $K\in \Th$, define
$$
K^+=K\cap \Oml,\qquad K^-=K\cap \Omr,\qquad \G_K=K\cap \G.
$$
For $K\subset\mathbb{R}^2$ and $r\in \mathbb{Z}^+$, let $P_r(K, \mathbb{R})$ be the space of all polynomials of degree not greater than $r$ on~$K$. Throughout the paper, a positive constant independent of the mesh size is denoted by $C$, which refers to different values at different places. For ease of presentation, we shall use the symbol $A\lesssim B$ to denote that $A\leq CB$.

\section{Direct Finite Element Method for interface problem}\label{sec:DiFEM}
In this section, we propose the DiFEM for \eqref{source} based on a weak formulation coupling the primal form and the mixed form, and also analyze the well  well-posedness of  the DiFEM under reasonable assumptions.

\subsection{A weak formulation of interface problem}\label{sec:weak}

Define the spaces on $\Oml$
$$
\Vl_g = \{\ul \in H^1(\Oml), \ul=g \ \mbox{on}\ \partial\Om\cap \partial\Oml\},\quad 
\Vl = \{\ul \in H^1(\Oml), \ul=0 \ \mbox{on}\ \partial\Om\cap \partial\Oml\},
$$
and two Sobolev spaces on $\Omr$
$$
\Vr = L^2(\Omr),\quad
\Hdiv=\{\taur\in L^2(\Omr): \Div \taur\in L^2(\Omr)\}.
$$
Let $\sigmal=\betal \Div \ul$ on the region $\Oml$ and $\sigmar=\betar\Div \ur$ on the region $\Omr$. 
The primal formulation is adopted for the second order elliptic equation \eqref{source} on $\Oml$, that is for any $\vl\in \Vl$, 
\begin{equation}\label{linearpart}
(\betal \nabla \ul, \nabla \vl)_\Oml =(\fl, \vl)_\Oml - \langle \betal\frac{\partial \ul }{\partial \bn}, \vl\rangle_\G=(\fl, \vl)_\Oml - \langle\sigmal\cdot\bn, \vl\rangle_\G,
\end{equation}  
where $\bmn$ is the unit normal pointing from $\Omr$ to $\Oml$.
The mixed formulation for the second order elliptic equation \eqref{source} on~$\Omr$ reads
\begin{equation} \label{mixpart}
\left\{
\begin{aligned}
\frac{1}{\betar}(\sigmar,\taur)_\Omr + (\ur,\Div\taur)_\Omr &=\langle \ur, \taur\cdot \bmn\rangle_\G
+\langle g, \taur\cdot \bmn\rangle_{\G_b},\quad \forall \taur\in \Hdiv
\\
(\Div \sigmar,\vr)_\Omr &=-(\fr,\vr)_\Omr,\ \qquad \quad\ \qquad \qquad \forall \vr\in \Vr,
\end{aligned}
\right.
\end{equation} 
where $\G_b=\partial\Om\cap \partial\Omr$. 
Note that  $\vl|_\G \in H^{\frac12}(\G)$ for any $\vl\in \Vl$ and $\taur\cdot\bn\in H^{-\frac12}(\G)$ for any $\taur\in \Hdiv$. The two interface conditions on  $u\in H^1(\Oml\cup \Omr)$ and $\sigma\cdot \bn$ with $\sigma\in {\rm H(div},\Oml\cup \Omr)$ form an $H^{\frac12}(\G)\times H^{-\frac12}(\G)$ pair for the interface integral $\langle\sigmal\cdot\bn, \vl\rangle_\G$ in the primal formulation \eqref{linearpart} and $\langle \ur, \taur\cdot \bmn\rangle_\G$ in the mixed formulation~\eqref{mixpart}. By the interface requirements in \eqref{source}, 
$$
\sigmal\cdot \bn=\sigmar\cdot \bn,\quad \ul=\ur.
$$
We can obtain the coupled formulation in \cite{wieners1998coupling} seeking $(\sigmar,\ur,\ul)\in \Hdiv\times \Vr\times \Vl_g$ such that for any $(\taur,\vr,\vl)\in \Hdiv\times \Vr\times \Vl$, 
\begin{equation}\label{weak0}
\left\{
\begin{aligned}
\frac{1}{\betar}(\sigmar,\taur)_\Omr + (\ur,\Div\taur)_\Omr -\langle \ul, \taur\cdot \bmn\rangle_\G&=\langle g, \taur\cdot \bmn\rangle_{\G_b},
\\
(\Div \sigmar,\vr)_\Omr \qquad \qquad \qquad \qquad \qquad \qquad \qquad \quad &=-(\fr,\vr)_\Omr,
\\
- \langle \sigmar\cdot \bmn, \vl\rangle_{\G}\qquad \qquad \qquad -(\betal \nabla \ul, \nabla \vl)_\Oml &=-(\fl, \vl)_\Oml,
\end{aligned}
\right.
\end{equation} 
which is a symmetric perturbed saddle point system. By substracting the last equation from the first one, the formulation \eqref{weak0} can be rewritten as an equivalent saddle point system
\begin{equation}\label{weak}
\left\{\begin{aligned}
a(\sigmar,\ul;\taur,\vl) + b(\ur;\taur, \vl)&=(\fl,\vl)_\Oml
+\langle g, \taur\cdot \bmn\rangle_{\G_b}
\\
b(\vr;\sigmar,\ul)&=(\fr,\vr)_\Omr,
\end{aligned}\right. 
\end{equation} 
where the bilinear forms  
\begin{equation}\label{abdef}
\begin{aligned}
a(\sigmar,\ul;\taur,\vl)=& \frac{1}{\betar}(\sigmar,\taur)_\Omr +(\betal \nabla \ul, \nabla \vl)_\Oml+ \langle  \sigmar\cdot \bmn, \vl\rangle_\G-\langle  \ul, \taur\cdot \bmn\rangle_\G,
\\
b(\ur;\taur, \vl)=&(\Div \taur,\ur)_\Omr.
\end{aligned}
\end{equation} 
Although \eqref{weak0} and \eqref{weak} are equivalent, the compact form \eqref{weak} is not symmetric because of the bilinear form $a(\cdot,\cdot)$ in \eqref{abdef}. 
We will analyze the well-posedness of the proposed weak formulation \eqref{weak0} in terms of the nonsymmetric compact form \eqref{weak}.
For any $(\taur,\vl)\in \Hdiv\times \Vl$ and $\vr\in \Vr$, define the norms
\begin{equation}\label{norm} 
\begin{aligned}
\interleave (\taur,\vl)\interleave_1=&\frac{1}{\sqrt{\betar}}\|\taur\|_{0,\Omr}  
+ \|\nabla\cdot \taur\|_{0,\Omr}
+ \sqrt{\betal}\|\nabla \vl\|_{0,\Oml},\quad
\interleave \vr\interleave_0 = \|\vr\|_{0,\Omr}.
\end{aligned}
\end{equation}  

The wellposedness result in the following lemma reveals how the solution of the coupled formulation depend on the coefficients $\betal$ and $\betar$.
\begin{lemma}\label{lm:wellposeweak}
The weak formulation \eqref{weak} is well defined, namely, there exists a unique solution $(\sigmar,\ur,\ul)\in \Hdiv\times \Vr\times \Vl_g$ of  \eqref{weak} and 
$$
\begin{aligned}
\interleave (\sigmar,\ul)\interleave_1\le &C(\frac{1}{\sqrt{\betal}}\|\fl\|_{0,\Oml} + \max(\sqrt{\betar},1)\|g\|_{\frac12,\G_b} + C_\beta\|\fr\|_{0,\Omr}),
\\
\interleave  \ur\interleave_0\le& CC_\beta(\frac{1}{\sqrt{\betal}}\|\fl\|_{0,\Oml} + \max(\sqrt{\betar},1)\|g\|_{\frac12,\G_b} +C_\beta\|f\|_{0,\Omr}),
\end{aligned}
$$
where positive constant $C$ is independent of $\betal$ and $\betar$, and $
C_\beta=\frac{\max(\sqrt{\betar/\betal},1)}{\min(\sqrt{\betar},1)}
$.

\end{lemma}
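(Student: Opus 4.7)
The plan is to apply the nonsymmetric version of the Brezzi saddle-point theorem to the compact form \eqref{weak}, with primary variable $(\sigmar,\ul) \in \Hdiv \times \Vl$ equipped with the norm $\interleave\cdot\interleave_1$ and Lagrange multiplier $\ur \in \Vr$ equipped with $\interleave\cdot\interleave_0$. The tasks reduce to (i) continuity of $a(\cdot;\cdot)$ and $b(\cdot;\cdot)$, (ii) coercivity of $a$ on $\ker B$, and (iii) the inf-sup condition for $b$. First, I would reduce to the case $g=0$ by constructing a lift $u_g \in H^1(\Om)$ with $u_g = g$ on $\partial\Om$, so that homogeneous Dirichlet conditions can be assumed modulo absorbing $g$-dependent terms into the right-hand side.

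For continuity, I would bound each piece of $a$ individually: the $(\sigmar,\taur)_\Omr/\betar$ and $(\betal\nabla\ul,\nabla\vl)_\Oml$ terms are controlled trivially by $\interleave\cdot\interleave_1$, while the boundary pairings $\langle\sigmar\cdot\bn,\vl\rangle_\G$ and $\langle\ul,\taur\cdot\bn\rangle_\G$ are handled via the trace inequalities $\|\taur\cdot\bn\|_{-\frac12,\G}\lesssim \|\taur\|_{0,\Omr}+\|\Div\taur\|_{0,\Omr}$ and $\|\vl\|_{\frac12,\G}\lesssim \|\vl\|_{1,\Oml}\lesssim \|\nabla\vl\|_{0,\Oml}$ (the latter using a Poincar\'e inequality since $\vl$ vanishes on $\partial\Om\cap\partial\Oml$). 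The key coercivity step is simple: the kernel of $B$ is $\{(\taur,\vl)\in \Hdiv\times\Vl : \Div\taur=0\}$ because $\Vr=L^2(\Omr)$, and on this kernel the two skew-symmetric boundary terms in $a$ cancel, leaving
\[
a(\taur,\vl;\taur,\vl)=\tfrac{1}{\betar}\|\taur\|_{0,\Omr}^2 + \betal\|\nabla\vl\|_{0,\Oml}^2 = \interleave(\taur,\vl)\interleave_1^2,
\]
so coercivity holds with constant $1$ independent of $\beta^\pm$.

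For the inf-sup of $b$, I would, given $\vr\in L^2(\Omr)$, solve an auxiliary Poisson problem on $\Omr$ (with, say, homogeneous Dirichlet data on $\partial\Omr$) and take $\taur=\nabla\phi$, yielding $\Div\taur=\vr$ with $\|\taur\|_{0,\Omr}+\|\Div\taur\|_{0,\Omr}\lesssim\|\vr\|_{0,\Omr}$. Choosing the test pair $(\taur,0)$ gives $b(\vr;\taur,0)=\|\vr\|_{0,\Omr}^2$ and $\interleave(\taur,0)\interleave_1\lesssim \max(1,1/\sqrt{\betar})\|\vr\|_{0,\Omr}$, so the inf-sup constant is $\beta_0\gtrsim \min(\sqrt{\betar},1)$.

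The main obstacle, and the reason the stated bound has the specific constant $C_\beta=\max(\sqrt{\betar/\betal},1)/\min(\sqrt{\betar},1)$, is tracking how the $\beta$-dependence enters the continuity norm of $a$: the cross terms $\langle\sigmar\cdot\bn,\vl\rangle_\G$ produce a factor $\max(\sqrt{\betar},1)/\sqrt{\betal}$ since one converts $\|\taur\|_{0,\Omr}+\|\Div\taur\|_{0,\Omr}\le \max(\sqrt{\betar},1)\interleave(\taur,\vl)\interleave_1$ and $\|\nabla\vl\|_{0,\Oml}\le\interleave(\taur,\vl)\interleave_1/\sqrt{\betal}$. Plugging these into the standard Brezzi estimates
\[
\interleave(\sigmar,\ul)\interleave_1 \lesssim \|F\|_{X'} + \tfrac{\|a\|}{\beta_0}\|G\|_{M'},\qquad \interleave\ur\interleave_0\lesssim \tfrac{1}{\beta_0}\bigl(\|F\|_{X'}+\|a\|\,\interleave(\sigmar,\ul)\interleave_1\bigr),
\]
together with $\|F\|_{X'}\lesssim \|\fl\|_{0,\Oml}/\sqrt{\betal}+\max(\sqrt{\betar},1)\|g\|_{\frac12,\G_b}$ and $\|G\|_{M'}\lesssim \|\fr\|_{0,\Omr}$, and combining $\|a\|/\beta_0 \lesssim C_\beta$, I would recover the two claimed estimates.
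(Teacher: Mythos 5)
Your proposal is correct and follows essentially the same route as the paper: both apply the Babu\v{s}ka--Brezzi theory to the compact form \eqref{weak}, checking continuity of $a$ and $b$ (with the interface pairings controlled by the $H^{\frac12}\times H^{-\frac12}$ trace inequalities), coercivity of $a$ on the kernel where $\Div\taur=0$ and the skew interface terms cancel, and the inf-sup condition for $b$ with constant $\gtrsim\min(\sqrt{\betar},1)$ via a divergence preimage, then tracking the $\beta$-dependence through the standard stability estimates to obtain $C_\beta$. The only additions on your side are the explicit lift of the Dirichlet data and the explicit construction of the preimage through an auxiliary Poisson problem, both of which the paper leaves implicit.
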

\begin{proof}
By the definition of the norms in \eqref{norm}, the two bilinear forms $a(\sigmar,\ul;\taur,\vl)$ and $b(\ur;\taur, \vl)$ are continuous. To be specific, there exists a positive constant $C$ such that
$$
\begin{aligned}
|a(\sigmar,\ul;\taur,\vl)|\le &C\max(\sqrt{\betar/\betal},1)\interleave (\sigmar,\ul)\interleave_1\interleave (\taur,\vl)\interleave_1,
\\
|b(\ur;\taur, \vl)|\le & \interleave (\taur,\vl)\interleave_1\interleave \ur\interleave_0,
 \end{aligned}
$$
where constant $C$ is independent of $\betal$ and $\betar$.
Define the kernel space of  $\Hdiv\times\Vl$ by
$$
Z=\{(\taur,\vl)\in \Hdiv\times\Vl: (\Div \taur,\vr)_\Omr=0,\quad \forall \vr\in \Vr\}.
$$
Note that $\Div\taur=0$ for any $(\taur,\vl)\in Z$, which indicates that the bilinear form $a(\sigmar,\ul;\taur,\vl)$ is coercive on $Z$, namely, 
\begin{equation}
a(\taur,\vl;\taur,\vl)= \frac{1}{\betar}\|\taur\|_{0,\Omr}^2 +\betal \|\nabla \vl\|_{0,\Oml}^2
\ge \frac12 \interleave (\taur,\vl)\interleave_1^2,\quad \forall (\taur,\vl)\in Z.
\end{equation}
Then, the uniqueness of the solution of \eqref{weak} is guaranteed by Theorem 4.2.1 in \cite{boffi2013mixed}.

For any $\vr\in \Vr$, there exists $\taur\in \Hdiv$ such that $\Div\taur = \vr$ and $\|\Div\taur\|_{0,\Omr} + \|\taur\|_{0,\Omr}\le C\|\vr\|_{0,\Omr}$. 
By the definition of $b(\ur;\taur, \vl)$, the inf-sup condition below holds
$$
\inf_{0\neq \vr\in \Vr}\sup_{(\taur,\vl)\in  \Hdiv\times \Vl}\frac{b(\vr;\taur, \vl)}{ \interleave (\taur,\vl)\interleave_1 \interleave \vr\interleave_0}
\ge \frac1C\inf_{0\neq \vr\in \Vr}\frac{\interleave \vr\interleave_0^2}{\max(\frac{1}{\sqrt{\betar}},1) \interleave \vr\interleave_0^2}
\ge \frac1C\min(\sqrt{\betar},1).
$$
It follows from Theorem 4.2.3 in \cite{boffi2013mixed} that 
$$
\begin{aligned}
\interleave (\sigmar,\ul)\interleave_1\le &C(\frac{1}{\sqrt{\betal}}\|\fl\|_{0,\Oml} + \max(\sqrt{\betar},1)\|g\|_{\frac12,\G_b} + C_\beta\|\fr\|_{0,\Omr}),
\\
\interleave  \ur\interleave_0\le& CC_\beta(\frac{1}{\sqrt{\betal}}\|\fl\|_{0,\Oml} + \max(\sqrt{\betar},1)\|g\|_{\frac12,\G_b} +C_\beta\|\fr\|_{0,\Omr}),
\end{aligned}
$$
which completes the proof. 
\end{proof}

\subsection{The direct finite element method}\label{sec:direct}
Given a triangulation $\cT_h$ of the whole domain $\Om$, we call  an element to be a non-interface element if the interface does not intersect with the element, or the interface intersects an edge  only at its vertices or this whole edge lies on the interface, and to be a non-interface element if the element is not an interface element. For ease of presentation, we denote the union of all non-interface elements in $\Oml$ and all interface elements by $\cT_h^+$, and the union of all non-interface elements in $\Omr$ and all interface elements by $\cT_h^-$.

We employ a conforming finite element in $H^1(\Oml)$ with shape function space $S_u^+(K)$ for $\ul$, a conforming finite element in $H(\mbox{div},\Omr)$ with shape function space $S_\sigma^-(K)$ for $\sigmar$, and an element in $L^2(\Omr)$ with shape function space $S_u^-(K)$ for $\ur$. 
Define the shape function space of the finite element for $\urh$ by $S^-_u(K)$, and
Define the corresponding finite element spaces 
\begin{equation}\label{conformspaces}
\begin{aligned}
\Vhl=&\{\vlh\in H^1(\Oml): \vlh|_{K}\in  S_u^+(K),\ \mbox{where}\ K\in \cT_h^+, \ \vlh|_{\partial \Om\cap \partial \Oml}=0\},
\\
\Qhr=&\{\tauh\in H(\mbox{div},\Omr): \tauh|_{K}\in  S_\sigma^-(K),\ \mbox{where}\ K^-\in \cT_h^-\},
\\
\Vhr=&\{\vrh\in L^2(\Omr): \vlh|_{K}\in  S_u^-(K),\ \mbox{where}\ K^-\in \cT_h^-\},
\end{aligned}
\end{equation} 
and $ \Vhlg$ is the finite element space for $\ulh$ with Dirichlet boundary condition $g$. Quadrature schemes are required for computing inner products over the subdomains $\Omr$ or $\Oml$, as well as along the interface $\G$ or boundary $\G_b$.  Let
$$
 (\cdot,\cdot)_{K^+,h},\qquad  (\cdot,\cdot)_{K^-,h},\qquad \langle\cdot,\cdot,\rangle_{\G_K,h}
 $$ 
be the discrete inner products obtained by empolying some quadrature schemes. Then, we can define discrete inner products $
\langle \cdot,\cdot\rangle_{\G,h}=\sum_{K:K_\G\neq \emptyset} \langle \cdot,\cdot\rangle_{\G_K,h}$, and  
$
(\cdot,\cdot)_{\Om^s,h}=\sum_{K:K^s\neq \emptyset} (\cdot,\cdot)_{K^s,h}$
with $s=+$, $-$.

Equipped with the conforming finite spaces  and quadrature schemes defined above, we propose
the DiFEM seeking $(\sigmah,\urh,\ulh)\in\Qhr \times \Vhr\times \Vhlg$ such that for any $(\tauh,\vrh,\vlh)\in \Qhr\times \Vhr\times \Vhl$, 
\begin{equation}\label{discrete}
\left\{\begin{aligned}
a_h(\sigmah,\ulh;\tauh,\vlh) + b_h(\urh;\tauh, \vlh)&=(\fl,\vlh)_{\Oml,h}
+\langle g, \tauh\cdot \bmn\rangle_{\G_b,h}
\\
b_h(\vrh;\sigmah,\ulh)&=(\fr,\vrh)_{\Omr,h},
\end{aligned}\right. 
\end{equation}
where the bilinear forms  
\begin{equation}\label{abquadrature}
\begin{aligned}
a_h(\sigmah,\ulh;\tauh,\vlh)=& \frac{1}{\betar}(\sigmah,\tauh)_{\Omr,h} +(\betal \nabla \ulh, \nabla \vlh)_{\Oml,h}+ \langle\sigmah\cdot \bmn, \vlh\rangle_{\G,h}-  \langle \tauh\cdot \bmn, \ulh\rangle_{\G,h},
\\
b_h(\urh;\tauh, \vlh)=&(\Div \tauh,\urh)_{\Omr,h}. 
\end{aligned}
\end{equation} 
By the definition in \eqref{conformspaces}, approximation $\ulh$ is defined on elements in $\cT_h^+$, both $\urh$ and $\sigmah$ are defined on elements in $\cT_h^-$, and all the variables $\ulh$, $\urh$, $\sigmah$ are defined on interface elements.
Similar to the weak formulation in \eqref{weak0}, the discrete formulation~\eqref{discrete} can be rewritten as a symmetric perturbed saddle point system.

Various finite element spaces and quadrature rules can be applied to  \eqref{discrete}, resulting in various discrete schemes.
Some assumptions on discrete spaces and quadrature schemes are proposed below to guarantee the wellposedness and optimal error analysis of the proposed DiFEM~\eqref{discrete}. 


\begin{assumption}\label{ass:error}
\begin{enumerate}
\item[]
\item[(A1)] [Boundedness] The bilinear forms are bounded with respect to the norms. To be specific,
\begin{equation}\label{bdd}
|a_h(\sigmah,\ulh;\tauh,\vlh)|\lesssim \interleave (\sigmah,\ulh)\interleave_{1,h} \interleave (\tauh,\vlh)\interleave_{1,h} ,
\quad
|b_h(\vrh;\sigmah,\ulh)|\lesssim   \interleave \vrh\interleave_{0,h}\interleave (\sigmah,\ulh)\interleave_{1,h}.
\end{equation}   
where $\|w_h\|_{0,\Om^s}\lesssim \|w_h\|_{\Om^s,h}=\sqrt{(w_h,w_h)_{\Om^s,h}}\lesssim \|w_h\|_{0,\Om^s}$ with $s=+$ or $-$ and
\begin{equation}\label{normh} 
\begin{aligned}
\interleave (\tauh,\vlh)\interleave_{1,h}^2 =&\frac{1}{\betar}\|\tauh\|_{\Omr,h}^2  
+ \|\nabla\cdot \tauh\|_{\Omr,h}^2
+ \betal\|\nabla \vlh\|_{\Oml,h}^2,
\quad
 \interleave \vrh\interleave_{0,h}=\|\vrh\|_{\Omr,h}.
\end{aligned}
\end{equation}    
\item[(A2)] 
[Inf-sup condition] The following inf-sup condition holds 
\begin{equation}\label{infsupb}
\inf_{0\neq \vrh\in \Vhr}\sup_{(\tauh,\vlh)\in \Qhr\times\Vhl}\frac{b_h(\vrh;\tauh, \vlh)}{\interleave (\tauh,\vlh)\interleave_{1,h} \interleave\vrh \interleave_{0,h} }\ge \alpha>0.
\end{equation}   
\item[(A3)][Approximation] The discrete spaces with $ \nabla\cdot \Qhr\subset \Vhr$ admit the following approximation property
\begin{equation}\label{app}
\inf_{(\tauh, \vlh)\in \Qhr\times \Vhl}\interleave (\sigmar-\tauh,\ul-\vlh)\interleave_{1,h} 
+ \inf_{\vrh\in \Vhr}\interleave\ur-\vrh\interleave_{0,h}\lesssim h^k,
\end{equation} 
provided that $u\in H^{k+1}(\Oml\cup\Omr)\cap H^1(\Om)$.
\item[(A4)][Quadrature accuracy] Consistency error of the DiFEM~\eqref{discrete} satisfies  that
\begin{equation}\label{consistency}
\begin{aligned} 
&\sup_{(\tauh,\vlh)\in \Qhr\times \Vhl}\frac{|a(\sigmar,\ul;\tauh,\vlh) - a_h(\sigmar,\ul;\tauh,\vlh)|}{\interleave (\tauh,\vlh)\interleave_{1,h} } \lesssim h^k,
\\
&\sup_{(\tauh,\vlh)\in \Qhr\times \Vhl}\frac{ |b(\ur;\tauh,\vlh)  - b_h(\ur;\tauh,\vlh) |}{\interleave (\tauh,\vlh)\interleave_{1,h} }
+ \sup_{\vrh\in \Vhr}\frac{ |b(\vrh;\sigmar,\ul)  - b_h(\vrh;\sigmar,\ul) |}{\interleave \vrh\interleave_{0,h} } \lesssim h^k,
\\
&\sup_{(\tauh,\vlh)\in  \Qhr\times \Vhl}\frac{|(f^+, v_h^+)_{\Om^+} - (f^+, v_h^+)_{\Om^+,h}| }{\interleave (\tauh,\vlh)\interleave_{1,h} } 
+ 
\sup_{\vrh\in \Vhr}\frac{|(f^-, v_h^-)_{\Om^-} - (f^-, v_h^-)_{\Om^-,h}|}{\interleave \vrh \interleave_{0,h} } \lesssim h^k,
\\
&\sup_{(\tauh,\vlh)\in \Qhr\times \Vhl}\frac{|\langle g, \tauh\cdot \bmn\rangle_{\G_b} - \langle g, \tauh\cdot \bmn \rangle_{\G_b,h}|}{\interleave (\tauh,\vlh)\interleave_{1,h} } \lesssim h^k,
\end{aligned}
\end{equation}
provided that $u\in H^{k+1}(\Oml\cup \Omr)\cap H^1(\Om)$.
\end{enumerate}
\end{assumption} 
Note that the assumptions above are standard requirements for the well-posedness and the convergence of finite element methods.
By the classic Babuška–Brezzi theory in \cite{boffi2013mixed},  a  discrete version of the analysis  in Lemma \ref{lm:wellposeweak} leads to the following well-posedness and optimal a priori analysis of the DiFEM \eqref{discrete} under Assumption~\ref{ass:error}.

\begin{theorem}\label{th:wellposedirect}
Under Assumption \ref{ass:error},  the proposed DiFEM  \eqref{discrete} is well defined, namely, there exists a unique solution $(\sigmah,\urh,\ulh)\in \Qhr\times \Vhr\times \Vhlg$ of  \eqref{discrete} and 
$$
\interleave (\sigmah,\ulh)\interleave_{1,h}  + \interleave  \urh\interleave_{0,h} \lesssim \|f\|_{0,\Om} + \|g\|_{\frac12, \G_b}.
$$ 
Moreover,  the solution $(\sigmah,\urh,\ulh)$ admits the optimal convergence
\begin{equation}\label{energyest}
\interleave (\sigmar-\sigmah, \ul-\ulh)\interleave_{1,h}  + \interleave \ur- \urh\interleave_{0,h} \lesssim h^k,
\end{equation}
provided that $u\in H^{k+1}(\Oml\cup\Omr)\cap H^1(\Om)$.
\end{theorem}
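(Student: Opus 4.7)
The plan is to treat Theorem \ref{th:wellposedirect} as a discrete incarnation of Lemma \ref{lm:wellposeweak}: apply the Babu\v{s}ka--Brezzi theory to the saddle point system \eqref{discrete} using Assumption \ref{ass:error}, and then deduce the error bound by a Strang-type argument that combines the approximation estimate (A3) with the consistency estimate (A4).

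First I would establish well-posedness. Define the discrete kernel
$$
Z_h=\{(\tauh,\vlh)\in \Qhr\times\Vhl : b_h(\vrh;\tauh,\vlh)=0,\ \forall \vrh\in \Vhr\}.
$$
The inclusion $\nabla\cdot \Qhr\subset \Vhr$ from (A3) implies $\Div\tauh=0$ for every $(\tauh,\vlh)\in Z_h$, exactly as in the continuous case. The cross terms $\langle\tauh\cdot\bmn,\vlh\rangle_{\G,h}$ in $a_h$ are antisymmetric and therefore cancel on the diagonal, yielding
$$
a_h(\tauh,\vlh;\tauh,\vlh)= \tfrac{1}{\betar}(\tauh,\tauh)_{\Omr,h}+\betal(\nabla \vlh,\nabla \vlh)_{\Oml,h}\gtrsim \interleave(\tauh,\vlh)\interleave_{1,h}^2
$$
on $Z_h$, where I use the norm equivalence in (A1). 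Together with the boundedness (A1) and the inf-sup condition (A2), the discrete Babu\v{s}ka--Brezzi theory (Theorem 4.2.3 in \cite{boffi2013mixed}) yields existence, uniqueness, and the stability estimate $\interleave(\sigmah,\ulh)\interleave_{1,h}+\interleave \urh\interleave_{0,h}\lesssim \|f\|_{0,\Om}+\|g\|_{\frac12,\G_b}$, using that the right-hand side of \eqref{discrete} is bounded on the test spaces by (A1) and (A4).

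For the error estimate I would follow the standard perturbation argument. Pick arbitrary approximants $(\tilde\sigma_h,\tilde u_h^-,\tilde u_h^+)\in \Qhr\times\Vhr\times\Vhlg$ and set $e_\sigma=\sigmah-\tilde\sigma_h$, $e_+=\ulh-\tilde u_h^+$, $e_-=\urh-\tilde u_h^-$. Subtracting \eqref{discrete} from \eqref{weak} tested against discrete functions produces error equations
$$
\begin{aligned}
a_h(e_\sigma,e_+;\tauh,\vlh)+b_h(e_-;\tauh,\vlh)&=R_1(\tauh,\vlh),\\
b_h(\vrh;e_\sigma,e_+)&=R_2(\vrh),
\end{aligned}
$$
where $R_1,R_2$ collect two kinds of terms: approximation residuals like $a_h(\sigmar-\tilde\sigma_h,\ul-\tilde u_h^+;\tauh,\vlh)$ and $b_h(\ur-\tilde u_h^-;\tauh,\vlh)$, and consistency residuals $a(\sigmar,\ul;\cdot)-a_h(\sigmar,\ul;\cdot)$, $b-b_h$ on the exact solution, together with the quadrature errors on the data $f^\pm$ and $g$. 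Bound the first kind by (A1) and the second by (A4); taking the infimum over approximants and invoking (A3) shows $\|R_1\|+\|R_2\|\lesssim h^k$. Applying the discrete stability already established to the error equations then gives $\interleave(e_\sigma,e_+)\interleave_{1,h}+\interleave e_-\interleave_{0,h}\lesssim h^k$, and one final triangle inequality with (A3) delivers \eqref{energyest}.

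The main technical obstacle is just careful bookkeeping: I must ensure that every discrepancy between the continuous and discrete bilinear forms and between the continuous and discrete right-hand sides is grouped so that exactly the estimates of (A4) apply, and that the approximation residuals are tested only in norms for which (A1) and (A3) provide matching bounds. Once this pairing is laid out cleanly, the rest is a routine application of the discrete Brezzi theorem.
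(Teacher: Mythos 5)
Your proposal is correct and follows essentially the same route as the paper: the identical kernel coercivity of $a_h$ combined with (A1)--(A2) gives well-posedness via the discrete Babu\v{s}ka--Brezzi theorem, and a Strang-type perturbation argument pairing the approximation property (A3) with the consistency estimates (A4) gives the error bound. The only cosmetic difference is that the paper channels the error estimate through the affine kernel $Z_h^B$, bounding $\interleave(\sigmar-\sigmah,\ul-\ulh)\interleave_{1,h}$ by coercivity on $Z_h$ and then $\interleave\ur-\urh\interleave_{0,h}$ separately via the inf-sup condition, whereas you apply the global discrete stability directly to the error equations; both are standard and equivalent here.
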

\begin{proof}
Define the discrete kernel space
$$
Z_h=\{(\tauh,\vlh)\in\Qhr\times \Vhl: (\Div \tauh,\vrh)_{\Omr,h}=0,\quad \forall \vrh\in \Vhr\},
$$
and 
$$
Z_h^B=\{(\tauh,\vlh)\in\Qhr\times \Vhl: (\Div \tauh,\vrh)_{\Omr,h}=(\fr,\vrh)_{\Omr,h},\quad \forall \vrh\in \Vhr\},
$$
For any $(\tauh,\vlh)\in Z_h\backslash \{0\}$, it follows from \eqref{abquadrature}, Assumption $(A1)$ and $(A2)$ that $\nabla\cdot \tauh=0$ and
\begin{equation}\label{infsupa}
a_h(\tauh,\vlh;\tauh,\vlh) = \frac{1}{\betar}\|\tauh\|_{\Omr,h}^2 + \betal \|\nabla \vlh\|_{\Oml,h}^2=\interleave (\tauh,\vlh)\interleave_{1,h} ^2,
\end{equation}
which indicates that $a_h(\cdot,\cdot)$ is coercive in  $Z_h$.
A combination of the boundedness \eqref{bdd}, the inf-sup conditions \eqref{infsupb} of  $b_h(\cdot, \cdot)$, the coercivity \eqref{infsupa} of  $a_h(\cdot,\cdot)$ in $Z_h$,  and the classic Babuška–Brezzi theory in \cite{boffi2013mixed} proves 
that the DiFEM   \eqref{discrete} is well posed.

For any $(\xi_h^-,w_h^+)\in Z_h^B$, it holds that $(\tauh,\vlh)=(\sigmah-\xi_h^-,\ulh-w_h^+)\in Z_h$. Note that
$$
a_h(\sigmah,\ulh;\tauh,\vlh)=(\fl,\vlh)_{\Oml,h}
+\langle g, \tauh\cdot \bmn\rangle_{\G_b,h},\qquad \forall (\tauh,\vrh)\in Z_h.
$$
It follows from the coercivity  of $a_h(\cdot,\cdot)$ on $Z_h$  that
$$
\begin{aligned}
\interleave (\sigmah-\xi_h^-,\ulh-w_h^+)\interleave_{1,h}^2= 
& a_h(\sigmar-\xi_h^-,\ul-w_h^+;\tauh,\vlh)
- \left(a_h(\sigmar,\ul;\tauh,\vlh) - a(\sigmar,\ul;\tauh,\vlh)\right)
\\
&- \left(a(\sigmar,\ul;\tauh,\vlh) - a_h(\sigmah,\ulh;\tauh,\vlh)\right).
\end{aligned}
$$
The fact $(\tauh,\vlh)\in Z_h$ implies that $b_h(\urh-\ur;\tauh, \vlh) =b_h(\vrh-\ur;\tauh, \vlh) $ for any $\vrh\in \Vhr$.
By the equations \eqref{weak} and \eqref{discrete} and ,
$$
\begin{aligned}
a(\sigmar,\ul;\tauh,\vlh) - a_h(\sigmah,\ulh;\tauh,\vlh)
=&\left((\fl,\vl)_\Oml
-(\fl,\vlh)_{\Oml,h}\right)
+\left(\langle g, \taur\cdot \bmn\rangle_{\G_b}
-\langle g, \tauh\cdot \bmn\rangle_{\G_b,h}\right)
\\
&+ \left(b_h(\ur;\tauh, \vlh)- b(\ur;\tauh, \vlh) \right)
+ b_h(\vrh-\ur;\tauh, \vlh).
\end{aligned}
$$
A combination of the boundedness of $a_h(\cdot,\cdot)$ and $b_h(\cdot,\cdot)$ in Assumption $(A1)$, Assumptions $(A4)$, and the two estimates above leads to 
$$
\interleave (\sigmah-\xi_h^-,\ulh-w_h^+)\interleave_{1,h}\lesssim \inf_{(\xi_h^-,w_h^+)\in Z_h^B}\interleave (\sigmar-\xi_h^-,\ul-w_h^+)\interleave_{1,h} + \inf_{\vrh\in \Vhr}\interleave\ur-\vrh\interleave_{0,h} + h^k,
$$
which indicates that 
$$
\interleave (\sigmar-\sigmah,\ul-\ulh)\interleave_{1,h}\lesssim \inf_{(\xi_h^-,w_h^+)\in Z_h^B}\interleave (\sigmar-\xi_h^-,\ul-w_h^+)\interleave_{1,h} + h^k.
$$
For any $(\tauh,\vlh)\in \Qhr\times \Vhl$, it follows from the inf-sup condition \eqref{infsupb} that
$$
\interleave (\sigmar-\xi_h^-,\ul-w_h^+)\interleave_{1,h}\lesssim  \interleave (\sigmar-\tauh,\ul-\vlh)\interleave_{1,h} + \sup_{\vrh\in \Vhr}{b_h(\vrh;\tauh-\xi_h^-,\vlh-w_h^+)\over \interleave \vrh\interleave_{0,h}}.
$$
By equation \eqref{weak} and the definition of $Z_h^B$,
$$
\begin{aligned}
b_h(\vrh;\tauh-\xi_h^-,\vlh-w_h^+)=&b_h(\vrh;\tauh-\sigmar,\vlh-\ul) 
+ \left(b_h(\vrh;\sigmar,\ul) - b(\vrh;\sigmar,\ul)\right)
\\
&+\left( (\fr,\vrh)_{\Omr} - (\fr,\vrh)_{\Omr,h}\right).
\end{aligned}
$$
A combination of the boundedness of $b_h(\cdot,\cdot)$ in Assumption $(A1)$, Assumptions $(A3)$-$(A4)$, and the three estimates above leads to 
$$
\interleave (\sigmar-\sigmah,\ul-\ulh)\interleave_{1,h}\lesssim   h^k.
$$  

By equations \eqref{weak} and \eqref{discrete},
\[
\begin{aligned}
b_h(\ur-\urh;\tauh,\vlh)=&\left(b_h(\ur;\tauh,\vlh) - b(\ur;\tauh,\vlh)\right)
+\left((\fl,\vlh)_{\Oml} - (\fl,\vlh)_{\Oml,h}\right)
\\
&
+\left(\langle g, \tauh\cdot \bmn\rangle_{\G_b} - \langle g, \tauh\cdot \bmn\rangle_{\G_b,h} \right)
- \left(a(\sigmar,\ul;\tauh,\vlh) - a_h(\sigmar,\ul;\tauh,\vlh)\right)
\\
&- a_h(\sigmar-\sigmah,\ul-\ulh;\tauh,\vlh).
\end{aligned}
\]
A combination of the boundedness in Assumption $(A1)$, Assumptions $(A2)$-$(A4)$, and the  estimate above leads to 
$$
\interleave \urh - \vrh \interleave_{0,h}\lesssim \sup_{(\tauh,\vlh)\in \Qhr\times \Vhl}
{|b_h(\ur-\vrh;\tauh,\vlh) | + | b_h(\ur-\urh;\tauh,\vlh)|\over \interleave (\tauh,\vlh) \interleave_{1,h}}\lesssim h^k,
$$
which implies \eqref{energyest} and completes the proof.

\end{proof}

\begin{remark}
The direct finite element method can be generalized to solve interface problems with nonhomogeneous interface condition 
$$
[u]=g_1,\quad \left[\beta\nabla u\cdot\bmn\right]=g_2  \mbox{ across }\Gamma,
$$
which leads to the discrete problem with the same bilinear forms $a_h(\cdot,\cdot)$ and $b_h(\cdot,\cdot)$ and slightly different right-hand sides. This indicates that the wellposedness of the discrete problem under this interface condition also holds if the discrete spaces and quadrature schemes satisfy the assumptions in Assumption \ref{ass:error}.

%
\end{remark}

\section{A simple lowest-order DiFEM and optimal error estimate}\label{sec:linear}
In this section, we consider the lowest-order finite element method with a particular quadrature formula, and analyze the well-posedness and optimal a priori error estimate.

Consider the lowest-order DiFEM, where the linear element is employed for $\ulh$ and the lowest order Raviart-Thomas element  for $\sigmah$, namely, the shape function spaces in \eqref{conformspaces} are
\begin{equation}\label{shapefunc}
S^+_u(K)=P_1(K,\mathbb{R}),\quad S^-_u(K)=P_0(K,\mathbb{R}),\quad S_\sigma^-(K)=P_0(K,\mathbb{R}^d) + \bmx P_0(K,\mathbb{R}).
\end{equation}
The quadrature schemes are required for the inner products in \eqref{abquadrature}. It needs to be mentioned that quadrature schemes on elements sharing nonempty and empty intersections with the interface~$\G$ are slightly different. For ease of presentation, given an interface element $K$, denote the line segment connecting the two intersects of $\G$ and~$\partial K$ by $\G_{K,h}$, the region enclosed by $\G_{K,h}$ and the subset of $\partial K$ inside $\Oml$ by $K_h^+$, and the remaining part of $K$ by $K_h^-$, namely $K=K_h^+\cup K_h^-$. Figure \ref{fig:interface} depicts a simple example for the notations here. We consider the following quadrature scheme. For any $\G_K$, 
\begin{equation} \label{quad11}
\begin{aligned}
\langle v, 1\rangle_{\G_K,h}=|\G_{K,h}|v(x_K^\G),\qquad x_K^\G\mbox{ is the midpoint of }\G_{K,h}
\end{aligned}
\end{equation}  
For any $G$ enclosed by three end-to-end curves, let $G_h$ be the triangle, where the vertices $\{\boldsymbol{p}_j\}$ are the endpoints of the curves, and 
\begin{equation} \label{quad12}
\begin{aligned}
(v, 1)_{G,h}=\frac13|G_h|\sum_{i=1}^3 v(x_{i,G}),\qquad x_{i,G}=\sum_{j=1}^3\lambda_{i,j}\boldsymbol{p}_j, 
\end{aligned}
\end{equation}  
where $\{(\lambda_{i,1}, \lambda_{i,2}, \lambda_{i,3})\}_{i=1}^3$ are $(2/3, 1/6, 1/6)$, $(1/6, 2/3, 1/6)$, and $(1/6, 2/3, 1/6)$, respectively. For any~$K^s$ enclosed by four end-to-end curves, since $K=K^s\cup (K/K^s)$, where both $K$ and $K/K^s$ are regions  enclosed by three end-to-end curves, let
\begin{equation} \label{quad13}
\begin{aligned}
(v, 1)_{K^s,h}=(v,1)_{K,h}-(v,1)_{K/ K^s,h}.
\end{aligned}
\end{equation}   
Note that the quadrature schemes \eqref{quad11}-\eqref{quad13} satisfy the condition that  
\begin{equation}\label{quadcondition}
\langle v, 1\rangle_{\G_{K},h} =\int_{\G_{K,h}}v\ds,\quad (w, 1)_{K^s, h}= \int_{K_h^s}w\dx,\qquad \forall v\in P_1(K,\mathbb{R}),\ w\in P_2(K,\mathbb{R}).
\end{equation} 

\begin{figure}[!ht]
\begin{center}
\begin{tikzpicture}[xscale=2,yscale=2]
\draw[-] (0,0) -- (1.5,-0.1);
\draw[-] (1.5,-0.1) -- (0.7,1.2);
\draw[-] (0.7,1.2) -- (0,0);  
\draw[-] (1.5,-0.1) -- (2.3,1.1);
\draw[-] (0.7,1.2) -- (2.3,1.1); 
\draw[-] (0.5,1.2) parabola bend (1.2,0.2) (2,0); 
\node at (0.65,1.3) {$ \Oml$};  
\node at (0.1,0.6) {$ \Omr$};   
\draw[ultra thick] (0.58,0.98) -- (1.32,0.2);
\draw[ultra thick] (1.32,0.2) -- (1.66,0.14); 
\node at (0.8,0.5) {$ \G_{K_1,h}$};  
\node at (1.5,0.3) {$ \G_{K_2,h}$};   

\end{tikzpicture} 
\end{center}
\caption{The thick solid lines denote the approximate interface $\G_{K,h}$.
}
\label{fig:interface}
\end{figure}

Next, we begin to prove that the DiFEM \eqref{discrete} equipped with quadrature schemes \eqref{quad11}-\eqref{quad13}  in conforming finite spaces \eqref{conformspaces} with  \eqref{shapefunc} is wellposed and admits the optimal convergence. According to Theorem \ref{th:wellposedirect}, it only remains to verify the assumptions in  Assumption \ref{ass:error}.


\begin{lemma}\label{lm:bdd}
The bilinear forms $a_h(\cdot,\cdot)$ and $b_h(\cdot,\cdot)$ in \eqref{abquadrature} with the particular quadrature formulas \eqref{quad11}-\eqref{quad13}   are bounded with respect to the norms in~\eqref{normh}.
\end{lemma}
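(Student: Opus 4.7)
The plan is to split $a_h(\cdot,\cdot)$ and $b_h(\cdot,\cdot)$ into volume contributions on $\Om^s$ and the interface pairings $\pm\langle\sigmah\cdot\bmn,\vlh\rangle_{\G,h}$, then bound the volume pieces by Cauchy--Schwarz against the discrete $L^2$ norms and the interface pieces by reducing them to an $H^{-1/2}(\G)\times H^{1/2}(\G)$ pairing. For the three volume pieces $\tfrac{1}{\betar}(\sigmah,\tauh)_{\Omr,h}$, $(\betal\nabla\ulh,\nabla\vlh)_{\Oml,h}$, and $(\Div\tauh,\urh)_{\Omr,h}$, the quadratures \eqref{quad12}--\eqref{quad13} are positive-weighted combinations of point evaluations on $K_h^s$. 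Once one verifies the norm equivalence $\|w_h\|_{\Om^s,h}\sim\|w_h\|_{0,\Om^s}$ on the finite element spaces -- which follows from the above positivity under a non-degenerate-cut assumption $|K_h^s|\gtrsim|K|$ on every interface element $K$ -- Cauchy--Schwarz applied to each discrete inner product yields the required bound with $\beta$-dependence consistent with Lemma~\ref{lm:wellposeweak}.

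The interface pieces are the substantive part. On each interface element $K$, since $\sigmah\in RT_0(K)$ makes $\sigmah\cdot\bmn$ affine along $\G_{K,h}$, and $\vlh\in P_1(K)$ is affine as well, finite-dimensional norm equivalence on polynomials over a line segment gives the midpoint-rule stability estimate
\[
|\langle\sigmah\cdot\bmn,\vlh\rangle_{\G_K,h}|\lesssim\|\sigmah\cdot\bmn\|_{0,\G_{K,h}}\|\vlh\|_{0,\G_{K,h}}.
\]
Summing with Cauchy--Schwarz reduces the interface bound to a product $\|\sigmah\cdot\bmn\|_{0,\G_h}\|\vlh\|_{0,\G_h}$, where $\G_h=\bigcup_K\G_{K,h}$. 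The $\vlh$-factor is controlled by the trace theorem together with the Poincar\'e inequality (applicable since $\vlh=0$ on $\partial\Om\cap\partial\Oml$), giving $\|\vlh\|_{0,\G_h}\lesssim\|\vlh\|_{1,\Oml}\lesssim\|\nabla\vlh\|_{0,\Oml}$. For the $\sigmah\cdot\bmn$-factor there is no direct $L^2(\G)$-trace, since $\sigmah$ is only $H(\div,\Omr)$-regular; I would instead integrate by parts against an $H^1$-extension $\tilde v\in H^1(\Omr)$ of $\vlh|_\G$ with $\|\tilde v\|_{1,\Omr}\lesssim\|\vlh\|_{1,\Oml}$, turning $\langle\sigmah\cdot\bmn,\vlh\rangle_\G$ into $(\sigmah,\nabla\tilde v)_{\Omr}+(\Div\sigmah,\tilde v)_{\Omr}$, which is directly bounded by $\|\sigmah\|_{H(\div,\Omr)}\|\vlh\|_{1,\Oml}$. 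The (small) discrepancy between the continuous pairing on $\G$ and the one-point quadrature on $\G_h$ is then absorbed by the same elementwise stability estimate above.

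The main obstacle is avoiding an $h^{-1}$ blow-up when assembling the local estimates across interface elements: the naive trace $\|\vlh\|_{0,\G_{K,h}}^2\lesssim h_K^{-1}\|\vlh\|_{0,K}^2+h_K\|\nabla\vlh\|_{0,K}^2$ summed without care would yield only an $h^{-1}\|\vlh\|_{0,\text{interface strip}}^2$ bound. The key observation is that the union of interface elements has width $\sim h$, so this sum collapses to an $L^2(\G)$-norm that is in turn controlled by the trace theorem. Keeping the local trace and extension constants uniform in the cut geometry is precisely what the non-degeneracy condition $|K_h^s|\gtrsim|K|$ provides; together with the discrete norm equivalence, this closes the boundedness estimate for both $a_h(\cdot,\cdot)$ and $b_h(\cdot,\cdot)$.
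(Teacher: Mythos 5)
Your overall architecture (volume terms by Cauchy--Schwarz, interface terms by an $H^{\frac12}\times H^{-\frac12}$ mechanism) matches the paper, and your \emph{fallback} for the interface pairing --- integrating by parts against an $H^1$ extension of $\vlh$, which is the same thing as bounding $\|\sigmah\cdot\bmn\|_{-\frac12,\GKh}$ by $\|\sigmah\|_{0,K_h^-}+\|\Div\sigmah\|_{0,K_h^-}$ and $\|\vlh\|_{\frac12,\GKh}$ by $\|\vlh\|_{1,K_h^+}$ --- is exactly what the paper does. But the route you put forward as primary does not close. Once you apply Cauchy--Schwarz to get $\|\sigmah\cdot\bmn\|_{0,\G_h}\,\|\vlh\|_{0,\G_h}$, you are stuck: for $\sigmah$ in the Raviart--Thomas space the polynomial trace/inverse estimate gives $\|\sigmah\cdot\bmn\|_{0,\GKh}^2\lesssim h_K^{-1}\|\sigmah\|_{0,K_h^-}^2$ at best, and there is no compensating factor of $h$ on the $\sigmah$ side. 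Your ``width-$\sim h$ strip'' observation rescues only the $\vlh$ factor (where $h^{-1}\|\vlh\|_{0,\mathrm{strip}}^2\lesssim\|\vlh\|_{1,\Oml}^2$); nothing analogous holds for $\|\sigmah\|_{0,\mathrm{strip}}$ relative to $\|\sigmah\|_{0,\Omr}$, so the product still carries an $h^{-1/2}$. You cannot ``go back'' and integrate by parts after having separated the two factors in $L^2(\G_h)$; the duality argument must be applied to the pairing itself, as in the paper. So the proof as written only works if you discard the first route entirely and promote the integration-by-parts route (applied over the polygonal region $\cup_K K_h^-$, whose boundary contains $\G_h$, not over $\Omr$) to the actual argument.

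Two further points. First, your justification of the discrete norm control on the volume terms is off: the composite rule \eqref{quad13} is a \emph{difference} of two quadratures, so it is not a positive-weighted combination of point values; positivity is the wrong lever. What the paper uses is exactness \eqref{quadcondition}: since $|\nabla\vlh|^2$, $|\urh|^2$, $|\Div\tauh|^2$ are constants and $|\tauh|^2\in P_2(K)$, the discrete norms are \emph{exactly} the $L^2$ norms over the polygonal pieces $K_h^\pm$, and the bilinear forms can be rewritten as exact integrals \eqref{abquad2}; Cauchy--Schwarz then applies with constant $1$, no cut non-degeneracy needed. Second, and relatedly, you import a hypothesis $|K_h^s|\gtrsim|K|$ that the paper does not make (its Assumption \ref{ass:intersect} only requires $|K_h^-|\geq ch^4$, and even that is not invoked in this lemma); your argument should not depend on it.
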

\begin{proof}
We first prove that $\|\nabla \vlh\|_{\Om^+,h}$ and $\|\tauh\|_{\Om^-,h}$ are norms in $\VL$ and $\VRT$, respectively. Thanks to \eqref{quadcondition}, it is easy to verify the semipositive definite property, the linearity and the triangle inequality. If  $\|\nabla \vlh\|_{\Om^+,h}=0$, by \eqref{quadcondition} and the fact that~$\nabla \vlh$ is piecewise constant, it holds that 
$$
\sum_{K_h^+\neq\emptyset}\|\nabla \vlh\|_{0,K_h^+}^2=\|\nabla \vlh\|_{\Om^+,h}^2=0,
$$
which implies  that $\vlh=0$ since $\vlh\in \VL$. Thus, $\|\nabla \vlh\|_{\Om^+,h}$ is a norm on $\VL$. Similarly, since $\tauh\in \VRT$ is piecewise linear, it follows from \eqref{quadcondition} that if $\|\tauh\|_{\Om^-,h}=0$, 
$$
\sum_{K_h^-\neq\emptyset}\|\tauh\|_{0,K_h^-}^2=\|\tauh\|_{\Om^-,h}^2=0,
$$
which implies that $\tauh=0.$
Then, $\|\tauh\|_{\Om^-,h}$ is a norm on $\VRT$. Furthermore, $\interleave (\tauh,\vlh)\interleave_{1,h}$ and $\interleave \vrh \interleave_{0,h}$ in \eqref{normh}  are norms in $\VRT\times \VL$ and $\VP$, respectively.

Note that $\vlh\in\VL$   and $\tauh\in \VRT$ are piecewise linear with constant $\tauh\cdot\bmn$ on $\G_{K,h}$ if element $K$ intersects with the interface $\G$. Since \eqref{quadcondition} holds for the quadrature formulas \eqref{quad11}-\eqref{quad13}, the bilinear forms in \eqref{abquadrature} can be written in an equivalent way
\begin{equation}\label{abquad2}
\begin{aligned}
a_h(\sigmah,\ulh;\tauh,\vlh)=& \sum_{K\in \Th}\frac{1}{\betar}(\sigmah, \tauh)_{K_h^-}
+ (\betal \nabla \ulh, \nabla \vlh)_{K_h^+}
+ \langle\sigmah\cdot \bmn, \vlh\rangle_{\G_{K,h}}-   \langle\tauh\cdot \bmn, \ulh\rangle_{\G_{K,h}},
\\
b_h(\urh;\tauh, \vlh)=&\sum_{K\in \Th} (\urh, \Div \tauh)_{K_h^-} ,
\end{aligned}
\end{equation}  
and the norms in \eqref{normh} can be rewritten as
\begin{equation}\label{normh2}
\begin{aligned}
\interleave (\tauh,\vlh)\interleave_h^2 =&\sum_{K\in \Th}\frac{1}{\betar}\|\tauh\|_{0,K_h^-}^2
+ \|\Div\tauh\|_{0,K_h^-}^2 
+ \betal \|\nabla \vlh\|_{0,K_h^+}^2,\quad 
 \interleave \vrh\interleave_h^2=&\sum_{K\in \Th}\|\vrh\|_{0,K_h^-}^2.
 \end{aligned}
\end{equation}  
By the  trace inequality, the Cauchy-Schwarz inequality and the Poincar${\rm\acute{e}}$ inequality that
$$
\begin{aligned}
\sum_{K\in\Th}\left|\langle \sigmah\cdot \bmn, \vlh \rangle_{\G_{K,h}}\right| 
\le&\sum_{K\in\Th}\|\sigmah\cdot \bmn\|_{-\frac12,\G_{K,h}}\|\vlh\|_{\frac12,\G_{K,h}}
\\
\lesssim&(\sum_{K\in\Th} \|\Div\sigmah\|_{0,K_h^-}^2 +  \|\sigmah\|_{0,K_h^-}^2)^\frac12
(\sum_{K\in\Th} \|\vlh\|_{1,K_h^+}^2 )^\frac12
\\
\lesssim &(\sum_{K\in\Th} \|\Div\sigmah\|_{0,K_h^-}^2 +  \|\sigmah\|_{0,K_h^-}^2)^\frac12 (\sum_{K\in\Th} \|\nabla\vlh\|_{0,K_h^+}^2)^\frac12,
\end{aligned}
$$ 
which completes the proof for the boundedness \eqref{bdd}. 
\end{proof}

In order to verify the inf-sup condition \eqref{infsupb} in Assumption \ref{ass:error}, we first design new interpolation operators, which admits the crucial commuting property on all elements in~$\cT_h^-$ under reasonable assumptions. 


\begin{figure}[!ht]
\begin{center}
\begin{tikzpicture}[xscale=4.3,yscale=3.5]
\node at (2.5,0.65) {$\bmp_1$};   
\node at (3,0.65) {$\bmp_2$};  
\node at (3.5,0.65) {$\bmp_3$};  
\node at (3.8,0.65) {$\bmp_4$};  
\node at (4.2,0.65) {$\bmp_5$};  
\node at (4.6,0.65) {$\bmp_6$};  
\node at (5,0.65) {$\bmp_7$};  
\node at (5.7,0.65) {$\bmp_8$}; 
\node at (6,0.65) {$\bmp_9$};  
\draw[-,ultra thick,red] (6,0.75) -- (6,1.5);		\node at (6.1,1.2) {$e_{9}^1$};  
\draw[-,ultra thick] (6,1.5) -- (2.5,1.5);
\draw[-,ultra thick,red] (2.5,0.75) -- (2.5,1.5);	\node at (2.45,1.2) {$e_{1}^1$};  
\node at (2.7,0.9) {$K_1^1$};  

\draw[dotted,ultra thick] (2.5,0.75) -- (3,0.75);
\draw[-,ultra thick,blue] (2.5,1.5) -- (3,0.75);	\node at (2.75,1.2) {$e_{2}^1$};  
\draw[dotted,ultra thick,blue] (2.8,1.5) -- (3,0.75);	\node at (2.92,1.2) {$e_{2}^2$};  
\draw[dotted,ultra thick,blue] (3.1,1.5) -- (3,0.75);	\node at (3.1,1.2) {$e_2^3$};  
\draw[dotted,ultra thick,blue] (3.6,1.5) -- (3,0.75);	\node at (3.3,1.2) {$e_2^4$};  
\node at (2.7,1.4) {$K_2^1$}; 
\node at (2.9,1.4) {$K_2^2$}; 
\node at (3.2,1.4) {$K_2^3$}; 

\draw[dotted,ultra thick] (3,0.75) -- (3.5,0.75);
\draw[-,ultra thick,red] (3.6,1.5) -- (3.5,0.75);	\node at (3.55,1.1) {$e_{3}^1$};  
\node at (3.4,0.9) {$K_2^4$}; 
\node at (3.65,0.9) {$K_3^1$};

\draw[dotted,ultra thick] (3.8,0.75) -- (3.5,0.75);
\draw[-,ultra thick,blue] (3.6,1.5) -- (3.8,0.75);	\node at (3.74,1.2) {$e_{4}^1$};  
\draw[dotted,ultra thick,blue] (4,1.5) -- (3.8,0.75);	\node at (3.95,1.2) {$e_{4}^2$};  
\draw[dotted,ultra thick,blue] (4.5,1.5) -- (3.8,0.75);	\node at (4.15,1.2) {$e_4^3$};  
\node at (3.8,1.4) {$K_4^1$}; 
\node at (4.1,1.4) {$K_4^2$}; 

\draw[dotted,ultra thick] (3.8,0.75) -- (4.2,0.75);
\draw[dotted,ultra thick] (4.2,0.75) -- (4.6,0.75);
\draw[-,ultra thick,red] (4.5,1.5) -- (4.2,0.75);	\node at (4.38,1.1) {$e_5^1$};  
\draw[-,ultra thick,red] (4.5,1.5) -- (4.6,0.75);	\node at (4.57,1.2) {$e_6^1$}; 
\node at (4.1,0.9) {$K_4^3$}; 
\node at (4.4,0.9) {$K_5^1$}; 
\node at (4.7,0.9) {$K_6^1$};

\draw[dotted,ultra thick] (4.6,0.75) -- (5,0.75); 
\draw[-,ultra thick,blue] (4.5,1.5) -- (5,0.75);	\node at (4.75,1.2) {$e_7^1$};  
\draw[dotted,ultra thick,blue] (4.8,1.5) -- (5,0.75);	\node at (4.95,1.2) {$e_7^2$};  
\draw[dotted,ultra thick,blue] (5.5,1.5) -- (5,0.75);	\node at (5.25,1.2) {$e_7^3$};  
\draw[dotted,ultra thick,blue] (5.8,1.5) -- (5,0.75);	\node at (5.43,1.2) {$e_7^4$};  
\node at (4.75,1.4) {$K_7^1$}; 
\node at (5.1,1.4) {$K_7^2$}; 
\node at (5.55,1.4) {$K_7^3$}; 

\draw[dotted,ultra thick] (5,0.75) -- (5.7,0.75);
\draw[-,ultra thick,blue] (5.8,1.5) -- (5.7,0.75);	\node at (5.7,1.2) {$e_8^1$};  
\draw[dotted,ultra thick,blue] (6,1.5) -- (5.7,0.75);	\node at (5.85,1.2) {$e_8^2$};  
\node at (5.85,1.4) {$K_8^1$}; 
\node at (5.5,0.9) {$K_7^4$}; 
\node at (5.9,0.9) {$K_8^2$}; 

\draw[dotted,ultra thick] (5.7,0.75) -- (6,0.75);
\draw[thick] (2.5,1) parabola bend (4.25,1.3) (6,1);
\node at (2.8,0.66) {$e_{1}^2$};   
\node at (3.25,0.66) {$e_{2}^5$};  
\node at (3.7,0.66) {$e_{3}^2$};  
\node at (4,0.66) {$e_{4}^4$};  
\node at (4.4,0.66) {$e_{5}^2$};  
\node at (4.8,0.66) {$e_{6}^2$};  
\node at (5.3,0.66) {$e_{7}^5$};  
\node at (5.85,0.66) {$e_{8}^3$};

\node at (2.65,1.6) {$\tilde e_{2}^1$};   
\node at (2.95,1.6) {$\tilde e_{2}^2$}; 
\node at (3.3,1.6) {$\tilde e_{2}^3$};   
\node at (3.8,1.6) {$\tilde e_{4}^1$};  
\node at (4.2,1.6) {$\tilde e_{4}^2$};  
\node at (4.65,1.6) {$\tilde e_{7}^1$};  
\node at (5.1,1.6) {$\tilde e_{7}^2$};  
\node at (5.6,1.6) {$\tilde e_{7}^3$};  
\node at (5.9,1.6) {$\tilde e_{8}^1$};

\node at (4,1.6) {$\Omr$};  
\node at (4,0.55) {$\Oml$};  
\end{tikzpicture}

\end{center}
\caption{Notations of vertices, edges and elements for a two-dimensional example.}
\label{fig:dof1}
\end{figure}
For any triangulation, the vertices in $\Oml $ of interface elements, denoted by $\bmp_1$,$\cdots$, $\bmp_\neout$, form a polyline. For each vertex $\bmp_\ell$ with $1\le\ell\le \neout$, denote the intersecting edges with endpoint $\bmp_\ell$ by $e_{\ell}^1$, $\cdots$, $e_{\ell}^{n_\ell}$, and the interface element with edges $e_{\ell}^j$ and $e_{\ell}^{j+1}$ by $K_\ell^j$. Denote the other interface element with edge $e_{\ell}^{n_\ell}$ by $K_\ell^{n_\ell}$, and the edge of $K_\ell^{n_\ell}$ in $\Oml$ by  $e_\ell^{n_\ell+1}$. For each interface element $K_\ell^j$ with $1\le j\le n_\ell-1$, denote the edge in $\Omr$ by $\tilde e_\ell^j$. An example of these notations is shown in Figure~\ref{fig:dof1}.  For intersecting edges, let $\bn_{e_\ell^i}$ be the unit normal direction of edge $e_\ell^i$ pointing from the element with smaller index to the element with larger index, and for interior edges $e$ to $\Omr$ of intersecting elements, let $\bn_{e}$ be the outer normal direction of the interface element. 
For each vertex $\bmp_\ell$ with $1\le\ell\le \neout$, define the set
$$
\mathcal{K}_\ell=\{K_\ell^j\}_{j=1}^{n_\ell}.
$$ 
Note that all the interface elements are exactly the union of all $\mathcal{K}_\ell$, where all the elements in each set share a common vertex in $\Oml$, and there exists at least one interior edge to $\Oml$ in each set.

Next we modify the definition of the canonical interpolation of the Raviart-Thomas element on interface elements to guarantee the important commuting property for the inf-sup condition. 
For any function $\taur\in H^1(\Omr)$, let $d_e(\taur)$ be the degrees of freedom of the Raviart-Thomas element with respect to edge $e$, namely
$
d_e(\taur) =
 \frac{1}{|e|}\int_{e}\taur\cdot \bn_{e} \ds,
$
and $\phi_e(x)$ be the corresponding basis function. Since the edge $e_\ell^i$ with $i\ge 2$ is a common edge of elements~$K_\ell^{i-1}$ and $K_\ell^{i}$,  and the corresponding basis functions 
\begin{equation}\label{basis}
\phi_{e_\ell^i}|_{K_\ell^{i-1}}=\frac{|e_\ell^i|}{2|K_\ell^{i-1}|}(x-p_{K_\ell^{i-1}}^i),\quad \phi_{e_\ell^i}|_{K_\ell^i}=-\frac{|e_\ell^i|}{2|K_\ell^i|}(x-p_{K_\ell^{i}}^{i}),
\end{equation}
where $p_K^i$ is the vertex of $K$ not belonging to $e_\ell^i$. 
Design an interpolation $\PiRTnew: H^1(\Omr)\rightarrow \VRT$ as 
\begin{equation}\label{interpolationdef}
|e|d_e(\PiRTnew\taur) = \begin{cases}
|e|d_e(\taur), &e=e_\ell^1,\ \mbox{or}\ e\subset \Omr
\\
|e_\ell^{i-1}|d_{e_\ell^{i-1}}(\taur) - d_{\tilde e
_\ell^{i-1}}(\taur)|\tilde e_\ell^{i-1}| + |K_\ell^{i-1}|\Pi_{K_\ell^{i-1}}^0\Div\taur ,& e=e_\ell^i,\ 2\le i\le n_\ell
\\
|e_\ell^{n_\ell}|d_{e_\ell^{n_\ell}}(\taur) - d_{e
_{\ell+1}^1}(\taur)|e
_{\ell+1}^1| + |K_\ell^{n_\ell}|\Pi_{K_\ell^{n_\ell}}^0\Div\taur,&e=e_\ell^{n_\ell+1}
\end{cases},
\end{equation}
where  the $L^2$ projection $\Pih \vr\in \Vhr$  satisfies that 
\begin{equation}\label{pi0}
\Pih \vr|_K=\Pi_K^0 \vr,\quad \mbox{with}\quad \Pi_K^0 \vr = \frac{1}{|K_h^-|}\int_{K_h^-} \vr\dx.
\end{equation} 

The lemma below shows that this new interpolation is bounded and admits the crucial commuting property and approximation property 
under the following assumption.

\begin{assumption}\label{ass:intersect}
Assume that the interface $\G$ is a $C^2$ curve.
\begin{enumerate}
\item The interface $\G$ can not intersect any edge at more than one point.
\item There exists a positive constant $c$ such that  
$$
ch^{4}\le |K_{h}^-|,\qquad \forall K\cap \G\neq \emptyset.
$$  
\end{enumerate}
\end{assumption} 
The first assumption above  holds if  the interface  is resolved enough by the unfitted mesh, and has been used in many works on unfitted meshes such as \cite{Adjerid2015AnID,chen2009the,gong2008immersed,gong2009immersed,Li1998TheII,li2006the,li2003new,2024A,guo2023Solving,burman2010fictitious,burman2012fictitious,hansbo2002an,nitsche1971ber,burman2018robust,wang2013hybridizable,casagrande2016DG,wang2014an,2022An,2024Unfitted}. The second assumption imposes a relatively loose restriction on  interface elements, namely the ratio~${|K_h^-|\over |K|}$ should be bounded below by $\mathcal{O}(h^2)$.

\begin{lemma}\label{lm:commuting}
If Assumption \ref{ass:intersect} holds, the interpolation $\PiRTnew: H^1(\Omr)\rightarrow \VRT$ is bounded, and admits the commuting property 
\begin{equation}\label{commuting2s}
\Div \PiRTnew \taur = \Pih \Div \taur.
\end{equation} 
Furthermore, there holds the inf-sup condition \eqref{infsupb} and the approximate property \eqref{app} with $k=1$, namely
\begin{equation}\label{interpolationerror}
\inf_{(\tauh, \vlh)\in \Qhr\times \Vhl}\interleave (\sigmar-\tauh,\ul-\vlh)\interleave_{1,h} 
+ \inf_{\vrh\in \Vhr}\interleave\ur-\vrh\interleave_{0,h}\lesssim h\|\taur\|_{1,\Omr}.
\end{equation}   
provided that $u\in H^2(\Oml\cup\Omr)\cap H^1(\Om)$.
\end{lemma}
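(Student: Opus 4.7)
The plan is to verify, in turn, boundedness of the modified interpolation $\PiRTnew$, the commuting property \eqref{commuting2s}, the inf-sup condition \eqref{infsupb}, and finally the approximation estimate \eqref{interpolationerror}.

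For boundedness, I would work edge by edge. On edges $e$ with $e = e_\ell^1$ or $e \subset \Omr$ the DOF coincides with the standard Raviart--Thomas one $d_e(\taur)$, which is controlled by $\|\taur\|_{1,K}$ on an adjacent element via the trace inequality. On a modified edge $e = e_\ell^i$ with $2 \le i \le n_\ell$ (and similarly $e = e_\ell^{n_\ell+1}$), the definition \eqref{interpolationdef} contributes the upstream edge terms $d_{e_\ell^{i-1}}(\taur)$, $d_{\tilde e_\ell^{i-1}}(\taur)$ and a volume term $|K_\ell^{i-1}|\Pi_{K_\ell^{i-1}}^0 \Div \taur = (|K_\ell^{i-1}|/|K_{\ell,h}^{i-1,-}|)\int_{K_{\ell,h}^{i-1,-}} \Div \taur \, \mathrm{d}x$; the ratio $|K_\ell^{i-1}|/|K_{\ell,h}^{i-1,-}|$ is bounded by Assumption \ref{ass:intersect}(2), and together with the basis scaling from \eqref{basis} and the uniform bound on the chain length $n_\ell$ (a consequence of shape regularity and Assumption \ref{ass:intersect}(1)), this yields $\|\PiRTnew \taur\|_{0,\Omr} \lesssim \|\taur\|_{1,\Omr}$.

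The commuting property \eqref{commuting2s} would then be verified by a direct element-by-element computation. On a non-interface element of $\cT_h^-$, $\PiRTnew$ reduces to the standard $\PiRT$ and the classical commuting property applies. On the first interface element $K_\ell^1$ of a chain, substituting the standard DOF at $e_\ell^1$ and the modified DOF at $e_\ell^2$ into the Raviart--Thomas divergence identity $|K_\ell^1| \Div \PiRTnew \taur|_{K_\ell^1} = -|e_\ell^1| d_{e_\ell^1}(\taur) + |e_\ell^2| d_{e_\ell^2}(\PiRTnew \taur) + |\tilde e_\ell^1| d_{\tilde e_\ell^1}(\taur)$ produces a precise cancellation of the $d_{e_\ell^1}(\taur)$ and $d_{\tilde e_\ell^1}(\taur)$ contributions, leaving exactly $|K_\ell^1| \Pi_{K_\ell^1}^0 \Div \taur$. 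A telescoping version of the same calculation, combined with the divergence theorem on the upstream element $K_\ell^{j-1}$, handles each remaining $K_\ell^j$ in the chain, with the closing element $K_\ell^{n_\ell}$ treated using the separate formula for $e_\ell^{n_\ell+1}$.

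The inf-sup condition \eqref{infsupb} would then follow from the commuting property by a standard Fortin argument: given $\vrh \in \Vhr$, extend by zero to $\bar\vrh \in L^2(\mathbb{R}^2)$, solve $-\Delta w = \bar\vrh$ with homogeneous Dirichlet data on a bounded domain containing $\Om$, and set $\taur = -\nabla w \in H^1$, so that $\Div \taur = \vrh$ on $\Omr$ and $\|\taur\|_{1,\Omr} \lesssim \|\vrh\|_{0,\Omr}$. Taking $\tauh = \PiRTnew \taur$ and $\vlh = 0$, the commuting property yields $\Div \tauh = \Pih \vrh = \vrh$, and since $\Div \tauh \cdot \vrh$ is piecewise constant, \eqref{quadcondition} gives $b_h(\vrh; \tauh, 0) = \|\vrh\|^2_{\Omr, h}$; combined with the boundedness step this produces the inf-sup estimate. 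Finally, \eqref{interpolationerror} follows from a standard $H^1$-conforming interpolation on $\Vhl$ (e.g., Scott--Zhang), a Bramble--Hilbert estimate for $\sigmar - \PiRTnew \sigmar$ in the discrete norm (the modification only affects a neighborhood of the interface, where Assumption \ref{ass:intersect}(2) keeps the constants uniform), and the classical $L^2$ projection estimate $\|\ur - \Pih \ur\|_{0,K_h^-} \lesssim h\|\ur\|_{1,K_h^-}$. The main obstacle is the boundedness step, because the volume correction in \eqref{interpolationdef} contains a factor $|K|/|K_h^-|$ that may in principle be as large as $h^{-2}$; Assumption \ref{ass:intersect}(2) is precisely what keeps it under control, but the argument requires care in balancing it against the $h^{-1}$ scaling of the Raviart--Thomas basis and the $O(h^3)$ size of $\int_{K_h^-} \Div \taur \, \mathrm{d}x$ from Cauchy--Schwarz, while ensuring the telescoping chain of modifications does not accumulate unbounded constants (the uniform bound on $n_\ell$ coming from shape regularity).
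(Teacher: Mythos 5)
Your overall architecture matches the paper's: commuting property by the element-wise Raviart--Thomas divergence identity with telescoping cancellation, inf-sup by a Fortin argument built on commuting plus boundedness, and approximation by canonical interpolants. The commuting and approximation parts are fine. The problem is that the one step you yourself identify as ``the main obstacle'' --- boundedness of $\PiRTnew$ --- is not actually carried out, and the single concrete claim you make about it is wrong: the ratio $|K_\ell^{i-1}|/|K_{\ell,h}^{i-1,-}|$ is \emph{not} bounded by Assumption \ref{ass:intersect}(2); that assumption only gives $|K|/|K_h^-|\lesssim h^{-2}$ (you concede as much in your closing paragraph, contradicting the earlier sentence). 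The resolution in the paper is not that this ratio is $O(1)$ but that the interpolant is measured in the \emph{discrete} norm, i.e.\ only over $K_h^-$: since the Raviart--Thomas basis functions are $O(1)$ in $L^\infty$, each DOF bound gets multiplied by $\|\phi_e\|_{0,K_h^-}\sim|K_h^-|^{1/2}$, and the volume correction contributes $\tfrac{|K'|}{|e|}\,|\Pi^0_{K'}\Div\taur|\cdot|K_h^-|^{1/2}\lesssim h\cdot|K_h'^-|^{-1/2}\|\Div\taur\|_{0,K_h'^-}\cdot h$, which is $O(1)\,\|\Div\taur\|_{0,K_h'^-}$ precisely because $|K_h'^-|\ge ch^4$. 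In summed form this is the paper's inequality $|K|\,|K_h^-|^{-1}|\mathcal{K}_\ell^-|\lesssim 1$. Without this bookkeeping the edge-by-edge bound you sketch does not close, so as written the boundedness (and hence the inf-sup constant's uniformity) is unproven. You also omit the case of a patch $\mathcal{K}_\ell$ containing a single interface element, where there is no chain and no $\tilde e$ edge; the paper handles it separately via a Dur\'an-type estimate using the two unmodified normal components, again invoking $h\,|K|^{1/2}/|K_h^-|^{1/2}\lesssim 1$.

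A second, smaller issue is the Fortin construction. Extending $\vrh$ by zero outside $\Omr$ and setting $\taur=-\nabla w$ gives $\Div\taur=0$ on the sliver $K_h^-\setminus\Omr$ of an interface element, so $\Pi_K^0\Div\taur=\tfrac{|K_h^-\cap\Omr|}{|K_h^-|}\vrh|_K\neq\vrh|_K$; the discrepancy factor is $1+O(|K_h^*|/|K_h^-|)$, which Assumption \ref{ass:intersect}(2) only controls up to $O(h^{-1})$. The paper sidesteps this by requiring $\Div\taur=\vrh$ on the slightly enlarged set $\Omr\cup\Omrh$, so that $\Pih\Div\taur=\vrh$ exactly and $b_h(\vrh;\tauh,0)=\interleave\vrh\interleave_{0,h}^2$. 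Your construction needs the same adjustment for the identity $\Div\tauh=\vrh$ to hold on interface elements.
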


\begin{proof}
Since $\PiRTnew\taur$ shares the same degrees of freedom with the canonical interpolation of the Raviart-Thomas element on interior edges to $\Omr$, the commuting property~\eqref{commuting2s} holds for any element $K\subset \Omr$. By \eqref{basis} and \eqref{interpolationdef}, the commuting property holds for $K_\ell^i$ with $2\le i\le n_\ell$ since
$$
\Div \PiRTnew\taur|_{K_\ell^i}= -d_{e_\ell^i}\frac{|e_\ell^i|}{|K_\ell^{i}|} +  d_{e_\ell^{i+1}}\frac{|e_\ell^{i+1}|}{|K_\ell^{i}|} + d_{\tilde e_\ell^i}\frac{|\tilde e_\ell^i|}{|K_\ell^{i}|}= \Pi_{K_\ell^{i}}^0\Div\taur.
$$
A similar argument proves the commuting property~\eqref{commuting2s} for all interface elements.

Consider  interface elements in a patch $\mathcal{K}_\ell$ with more than one interface element.  By~\eqref{interpolationdef}, it holds for $1\le \ell\le \neout$ and $1\le i\le n_\ell$ that
\begin{equation}\label{pinewi}
d_{e_\ell^i}(\PiRTnew \taur) = {|e_\ell^{1}|\over |e_\ell^i|}d_{e
_\ell^{1}}(\taur) + \sum_{j=1}^{i-1} {|K_\ell^{j}|\over |e_\ell^i|}\Pi_{K_\ell^{j}}^0\Div\taur - \sum_{j=1}^{i-1}{|\tilde e_\ell^{j}| \over |e_\ell^i|}d_{\tilde e
_\ell^{j}}(\taur) ,
\end{equation}
and  
$
d_{e_\ell^{n_\ell+1}}(\PiRTnew \taur) = {|e_\ell^{n_\ell}| \over |e_\ell^{n_\ell+1}|}d_{e_\ell^{n_\ell}}(\taur) - {|e
_{\ell+1}^1|\over |e_\ell^{n_\ell+1}|} d_{e_{\ell+1}^1}(\taur)+ {|K_\ell^{n_\ell}|\over |e_\ell|}\Pi_{K_\ell^{n_\ell}}^0\Div\taur,
$ namely,
$$
d_{e_\ell^{n_\ell+1}}(\PiRTnew \taur) = {|e_\ell^1| \over |e_\ell^{n_\ell+1}|}d_{e
_\ell^{1}}(\taur) - {|e_{\ell+1}^1| \over |e_\ell^{n_\ell+1}|}d_{e
_{\ell+1}^{1}}(\taur)  + \sum_{j=1}^{n_\ell} {|K_\ell^{j}|\over |e_\ell^{n_\ell+1}|}\Pi_{K_\ell^{j}}^0\Div\taur - \sum_{j=1}^{n_\ell-1}{|\tilde e_\ell^{j}| \over |e_\ell^{n_\ell+1}|}d_{\tilde e
_\ell^{j}}(\taur).
$$
Let $\tilde K_\ell^j$ be the interior element to $\Omr$ with edge $\tilde e_\ell^j$, and $\tilde{\mathcal{K}}_\ell=\cup_{j=1}^{n_\ell-1} \tilde K_\ell^j$, and $I_h\tau$ be the Crouzeix-Raviart interpolation of $\tau\in H^1(\Omr)$. By the interpolation error estimate and the scaling argument,
$$
\begin{aligned}
|d_{\tilde e_\ell^j}(\taur)| = &|\frac{1}{|\tilde e_\ell^j|}\int_{\tilde e_\ell^j}I_h\taur\cdot \bn_{\tilde e_\ell^j}\ds|
\lesssim |\tilde K_\ell^j|^{-\frac12}\|I_h\taur\|_{0,\tilde K_\ell^j} +  \|\nabla I_h\taur\|_{0,\tilde K_\ell^j}
\lesssim |\tilde K_\ell^j|^{-\frac12}\|\taur\|_{0,\tilde K_\ell^j} +  \|\nabla \taur\|_{0,\tilde K_\ell^j}.
\end{aligned}
$$   
Note that
$
{|K_\ell^{j}|\over |e_\ell^i|}| \Pi_{K_\ell^{j}}^0\Div\taur | \lesssim ({|K|\over |K_h^-|})^{\frac12}\|\Div\taur\|_{0,K_h^-}
$ with $K=K_\ell^{j}$. 
Since the underlying triangulation is regular, 
it follows from \eqref{interpolationdef} and \eqref{pinewi} that for $K=K_\ell^i$ with $1\le i\le n_\ell$,
\begin{equation*}
\begin{aligned}
\|\PiRTnew\taur\|_{0,K_h^-}
\lesssim & \left(|d_{e_\ell^{1}}(\taur)| +|d_{e_{\ell+1}^{1}}(\taur)| +
	\sum_{ \tilde K\in \tilde{\mathcal{K}}_\ell} | \tilde K|^{-\frac12}\|\taur\|_{0, \tilde K} +  \|\nabla\taur\|_{0, \tilde K} \right.
	\\
	&\left. + \sum_{ \tilde K\in \mathcal{K}_\ell}|\tilde K|^\frac12 |\tilde{K}_h^-|^{-\frac12}\|\Div\taur\|_{0,\tilde{K}_h^-}\right)|K_h^-|^\frac12.
\end{aligned}
\end{equation*} 
A summation of the square of the estimate above on all interface elements leads to
\begin{equation*}
\begin{aligned}
\sum_{n_\ell>1}\sum_{K\in \mathcal{K}_\ell}\|\PiRTnew\taur\|_{0,K_h^-}^2\lesssim & \sum_{n_\ell>1}\left(|d_{e_\ell^{1}}(\taur)|^2 +|d_{e_{\ell+1}^{1}}(\taur)|^2 +
	\sum_{ \tilde K\in \tilde{\mathcal{K}}_\ell} | \tilde K|^{-1}\|\taur\|^2_{0, \tilde K} +  \|\nabla\taur\|^2_{0, \tilde K} \right.
	\\
	&\left. + \sum_{ \tilde K\in \mathcal{K}_\ell}|\tilde K|  |\tilde{K}_h^-|^{-1}\|\Div\taur\|^2_{0,\tilde{K}_h^-}\right)|\mathcal{K}_\ell^-|,
\end{aligned}
\end{equation*} 
where $|\mathcal{K}_\ell^-|=\sum_{K\in \mathcal{K}_\ell} |K_h^-|$. 
Note that the number of elements in $\mathcal{K}_\ell$ is bounded above. The regular mesh implies that $|K|^{-1}|\mathcal{K}_\ell^-|\lesssim 1$ for any $ K\in \mathcal{K}_\ell\cup \tilde{\mathcal{K}}_\ell$. And by Assumption~\ref{ass:intersect}, $|K| |K_h^-|^{-1}|\mathcal{K}_\ell^-|\lesssim 1$ for any $K\in \mathcal{K}_\ell$.
Since
$
| d_{e
_\ell^{1}}(\taur)|\lesssim |K_\ell^1|^{-\frac12}\| \taur\|_{0,K_\ell^1} + \|\nabla \taur\|_{0,K_\ell^1},
$
a combination of the estimates above yields
\begin{equation}\label{bdd:interpolation}
\begin{aligned}
\sum_{n_\ell>1}\sum_{K\in \mathcal{K}_\ell}\|\PiRTnew\taur\|_{0,K_h^-}^2\lesssim & \sum_{n_\ell>1}
	\sum_{K\in \mathcal{K}_\ell\cup\tilde{\mathcal{K}}_\ell} \|\taur\|^2_{0, K} +  \|\nabla\taur\|^2_{0, K}.
\end{aligned}
\end{equation}   

For a patch $\mathcal{K}_\ell$ with only one interface element $K=K_\ell^1$, the degrees of freedom of $\PiRTnew\taur$ on the two intersecting edges $e_\ell^1$ and $e_{\ell+1}^1$ are the same as the canonical interpolation of the Raviart-Thomas element. A similar argument to the one for Theorem 2.5 in \cite{duran2008error} shows that
$$
\begin{aligned}
\|\taur - \PiRTnew\taur \|_{0,K}\le& {C\over \sin \alpha}\left(\|(\taur - \PiRTnew\taur)\cdot \bn_{e_\ell^1} \|_{0,K} + \|(\taur - \PiRTnew\taur)\cdot \bn_{e_{\ell+1}^1}\|_{0,K}\right)
\\
\lesssim &  h\left(\|\nabla(\taur - \PiRTnew\taur) \bn_{e_\ell^1} \|_{0,K} + \|\nabla(\taur - \PiRTnew\taur) \bn_{e_{\ell+1}^1}\|_{0,K}\right),
\end{aligned}
$$
where $\alpha$ is the angle between $e_{\ell}^1$ and $e_{\ell+1}^1$.
Thus,
$$
\|\PiRTnew\taur \|_{0,K}
\lesssim \|\taur\|_{0,K}+ h\|\nabla\taur\|_{0,K} +   h\|\nabla \PiRTnew\taur \|_{0,K}.
$$
Note that $\nabla \PiRTnew\taur=\frac12 (\Div\PiRTnew\taur) I$ is constant on $K$, which indicates that
$$
\|\nabla \PiRTnew\taur \|_{0,K}={|K|^\frac12\over |K_h^-|^\frac12}\|\nabla \PiRTnew\taur \|_{0,K_h^-}\lesssim {|K|^\frac12\over |K_h^-|^\frac12}\|\nabla \taur \|_{0,K_h^-}.
$$
It follows that
\begin{equation}\label{bdd:interpolation2}
\|\PiRTnew\taur \|_{0,K_h^-}\le \|\PiRTnew\taur \|_{0,K}
\lesssim \|\taur\|_{0,K}+ h\|\nabla\taur\|_{0,K} +   h {|K|^\frac12\over |K_h^-|^\frac12}\|\nabla \taur \|_{0,K_h^-}.
\end{equation}
By Assumption~\ref{ass:intersect}, $h {|K|^\frac12\over |K_h^-|^\frac12}\lesssim 1$. Thus, a combination of \eqref{bdd:interpolation} and \eqref{bdd:interpolation2} leads to 
$$
\sum_{K\cap \Omr\neq \emptyset}\|\PiRTnew\taur \|_{0,K_h^-}^2\lesssim \sum_{K\cap \Omr\neq \emptyset}\|\taur \|_{0,K}^2 + \|\nabla\taur \|_{0,K}^2\lesssim \sum_{K\cap \Omr\neq \emptyset}\|\taur \|_{0,K_h^-}^2 + \|\nabla\taur \|_{0,K_h^-}^2,
$$
where the second estimate comes from the Sobolev extension theorem. This, together with
$
\|\Div\PiRTnew\taur\|_{\Omr,h}=\|\Pi_h^0\Div\taur\|_{\Omr,h}\lesssim \|\Div\taur\|_{\Omr,h}
$
by the commuting property~\eqref{commuting2s},  proves the boundedness of the interpolation operator $\PiRTnew$.


For any $\vrh\in \Vhr$, let  $\vlh=0$ and $\tauh=\PiRTnew  \taur$, where 
$$
\Div \taur=\vrh\ \quad \mbox{on}\ \Omr\cup \Omrh,\quad \mbox{with}\quad \|\taur\|_{1,\Omr}\lesssim \interleave\vrh \interleave_{0,h}.
$$  
By the commuting property~\eqref{commuting2s} and boundedness of $\PiRTnew$,  
\begin{equation} \label{infsupbdd}
\interleave (\tauh,\vlh)\interleave_{1,h}\lesssim  \|\PiRTnew\taur\|_{\Omr,h} + \|\Div\PiRTnew\taur\|_{\Omr,h}\lesssim \interleave \vrh\interleave_{0,h},
\end{equation}  
which, together with the fact that
$
b_h(\vrh;\tauh, \vlh)
=  \interleave \vrh\interleave_{0,h} ^2,
$
leads to the discrete inf-sup condition \eqref{infsupb} of the bilinear form $b_h(\cdot,\cdot)$, namely, 
\begin{equation*} 
\inf_{0\neq \vrh\in \Vhr}\sup_{(\tauh,\vlh)\in \Qhr\times \Vhl}\frac{b_h(\vrh;\tauh, \vlh)}{\interleave (\tauh,\vlh)\interleave_{1,h}\interleave\vrh \interleave_{0,h}}\ge \alpha>0.
\end{equation*}  

Given any $u\in H^2(\Oml\cup\Omr)\cap H^1(\Om)$, let
$\tauh=\PiRT \sigma^-$, $ \vlh=\Pi_L u^+$ and $ \vrh=\Pi_h^0u^-$
be the canonical interpolation of the Raviart-Thomas element, the linear element and the piecewise constant projection, respectively. It holds that
$$
\begin{aligned}
&\|\sigmar-\tauh\|_{\Omr,h} + \|\Div(\sigmar-\tauh)\|_{\Omr,h} + \|\nabla (\ul-\vlh)\|_{\Oml,h} + \|\ur-\vrh\|_{\Omr,h}
\\
\lesssim &\|(I-\PiRT)\sigma^-\|_{\Om} + \|\Div (I-\PiRT)\sigma^-\|_{\Om} + \|\nabla(I- \Pi_L)u^+\|_{\Om} + \|(I-\Pi_h^0)u^-\|_{\Om}\lesssim h,
\end{aligned}
$$  
which completes the proof.

\end{proof}

The following lemma analyzes the consistency error terms in Assumption \ref{ass:error}.
\begin{lemma}\label{lm:quadrature}
Under Assumption \ref{ass:intersect}, the consistency error estimate \eqref{consistency} holds for the DiFEM \eqref{discrete} equipped with quadrature schemes  \eqref{quad11}-\eqref{quad13} in conforming finite spaces \eqref{conformspaces} with  \eqref{shapefunc}. 
\end{lemma}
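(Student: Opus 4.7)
The plan is to decompose each difference appearing in (2.17) into (i) a quadrature residual evaluated on the polygonal sub-element $K_h^s$ or chord $\GKh$, and (ii) a purely geometric error arising because the curved $K^s$ or $\Gamma_K$ has been replaced by its polygonal model. Property \eqref{quadcondition} shows the three-node rule is exact on $P_2(K_h^s)$ and the midpoint rule on $P_1(\GKh)$, so the quadrature residuals reduce to standard Bramble--Hilbert type terms; the $C^2$-regularity of $\G$ in Assumption \ref{ass:intersect}, together with the resolution of $\G$ by the mesh, gives the geometric ingredient, namely that $K^s \triangle K_h^s$ is a strip of area $O(h^3)$ and that $\G_K$ and $\GKh$ are within $O(h^2)$ pointwise.

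First I would dispatch the volumetric contributions. For $a_h$, the discrepancy in $\frac{1}{\betar}(\sigmar,\tauh)$ splits as $\int_{K^- \triangle K_h^-} \sigmar \cdot \tauh \,\dx$ plus the quadrature residual on $K_h^-$. Since $\tauh$ is a globally linear polynomial and $\sigmar$ is the Sobolev extension to a fixed neighborhood of $\Omr$, the strip integral is bounded via Cauchy--Schwarz by $\|\sigmar\|_{L^2(K^- \triangle K_h^-)}\|\tauh\|_{L^2(K^- \triangle K_h^-)}$, and the standard trace-type estimate for an $O(h^2)$-wide strip yields $h \|\sigmar\|_{1,K}\|\tauh\|_{0,K}$. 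The quadrature residual is $O(h)$ locally by Bramble--Hilbert applied to the quadratic Taylor remainder of $\sigmar \cdot \tauh$. The terms $(\betal \nabla \ul,\nabla \vlh)$ and $(\Div \tauh,\ur)$ are handled identically, using that $\nabla \vlh$ and $\Div \tauh$ are element-wise constant. The right-hand sides $(f^{\pm},\cdot)$ fit the same template, and the Dirichlet term $\langle g,\tauh\cdot\bmn\rangle_{\G_b}$ is easier because $\G_b \subset \partial\Om$ is polygonal and aligned with the mesh, so only a straight-edge midpoint quadrature error contributes.

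The interface duality terms $\langle \sigmar \cdot \bmn, \vlh\rangle_{\G}$ and $\langle \tauh \cdot \bmn, \ul\rangle_{\G}$ are the most delicate. Here I would exploit the fact that $u \in H^2(\Oml \cup \Omr)$ upgrades the trace of $\sigmar\cdot \bmn$ from $H^{-\frac12}(\G)$ to $H^{\frac12}(\G)$, so both pairings are honest $L^2$ integrals on $\G$. Then I would compare $\int_{\G_K} \sigmar\cdot\bmn\, \vlh\, \ds$ with $\int_{\GKh} \sigmar\cdot\bmn_h\, \vlh\, \ds$ by the natural graph parameterization of $\G_K$ over $\GKh$: the surface-element ratio, the normal rotation $|\bmn - \bmn_h|$, and the pointwise distance are all $O(h^2)$ on a $C^2$ curve of length $O(h)$. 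After this geometric reduction, the remaining error is the midpoint-rule residual of a linear integrand on $\GKh$, which vanishes by \eqref{quadcondition}; the leftover contribution is absorbed in the geometric term and summed with a Cauchy--Schwarz against $\interleave (\tauh,\vlh)\interleave_{1,h}$.

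The main obstacle will be making the strip-integral estimates robust under Assumption \ref{ass:intersect}. The hypothesis $|K_h^-| \ge ch^4$ is much weaker than a true shape-regularity of cut elements, so inverse estimates on $K_h^-$ cannot be used; instead, I must bound $\|\tauh\|_{L^2(\text{strip})}$ and $\|\nabla\vlh\|_{L^2(\text{strip})}$ by their norms on the full uncut element $K$, which is permissible because $\tauh$ and $\nabla\vlh$ are restrictions of polynomials defined on all of $K$, not only on $K_h^s$. A second subtle point is the careful first-order Taylor expansion of $\bmn$ along $\G$ needed to produce the $O(h^2)$ pointwise difference with $\bmn_h$, so that after integration the interface-term error is uniformly $O(h)$. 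Once these two technical lemmas are in hand, a summation over the $O(h^{-1})$ interface elements, combined with the global $H^2$-bound on $u$ furnished by the regularity assumption, yields \eqref{consistency} with $k=1$.
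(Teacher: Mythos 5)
Your overall architecture --- splitting each discrepancy into a quadrature residual on the polygonal model plus a geometric error, using the exactness properties \eqref{quadcondition} for the former and the $C^2$-regularity of $\G$ (strip area $|K_h^*|\lesssim h^3$) for the latter --- is exactly the paper's strategy, and your treatment of the volumetric and right-hand-side terms matches the paper's almost verbatim. The one place where you genuinely diverge is the interface pairing: the paper avoids your graph parameterization entirely by applying the divergence theorem on the strip $K_h^*$ enclosed between $\G_K$ and $\GKh$, writing
$\langle \tauh\cdot \bmn, \ul \rangle_{\G_K} - \langle \tauh\cdot \bmn, \ul \rangle_{\GKh} = \int_{K_h^*}\ul\Div\tauh + \tauh\cdot\nabla \ul\dx$,
which converts the difference of two line integrals into a single area integral over an $O(h^3)$ set and sidesteps any discussion of the normal rotation $|\bmn-\bmn_h|$ or arc-length Jacobians. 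Your parameterization route can be made to work, but it is longer and needs $L^\infty$ control of $\ul$ and of the lowest-order polynomials, whereas the paper's identity is one line.

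The genuine gap is in your closing step. You propose to bound the strip contributions by norms of $\tauh$ and $\nabla\vlh$ \emph{on the full uncut element $K$}, but the denominator of \eqref{consistency} is $\interleave (\tauh,\vlh)\interleave_{1,h}$, which only controls $\|\tauh\|_{0,K_h^-}$, $\|\Div\tauh\|_{0,K_h^-}$ and $\|\nabla\vlh\|_{0,K_h^+}$ on the \emph{cut} portions. Passing from $K$ back to $K_h^-$ is not free: when $K_h^-$ is a sliver, the equivalence constant between $\|\cdot\|_{0,K}$ and $\|\cdot\|_{0,K_h^-}$ for a fixed-degree polynomial degenerates, and this is precisely where Assumption \ref{ass:intersect}(2) is consumed. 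For instance, for the piecewise-constant $\Div\tauh$ one has $\|\Div\tauh\|_{L^\infty(K)}=|K_h^-|^{-1/2}\|\Div\tauh\|_{0,K_h^-}\lesssim h^{-2}\|\Div\tauh\|_{0,K_h^-}$, and only after multiplying by $|K_h^*|\lesssim h^3$ does one recover the factor $h$; the exponent $4$ in $|K_h^-|\ge ch^4$ is calibrated exactly for this arithmetic. Your proposal instead reads the assumption merely as an obstacle to inverse estimates on $K_h^-$ and stops at the full-element norms, so as written the estimate is not closed against the norm that actually appears in \eqref{consistency}. Supplying this element-by-element passage from $K$ (or $K_h^*$) to $K_h^-$, for both $\tauh$ and $\nabla\vlh$, is the missing ingredient.
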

\begin{proof}  
By the quadrature formula \eqref{quadcondition} and the analysis of Theorem 4.1.4 in \cite{ciarlet2002the},  
\begin{equation}\label{f1}
| (\fl, \vlh)_{K_h^+} - (\fl,\vlh)_{K_h^+,h}|\lesssim h^2\|\fl\|_{2,K_h^+}\|\vlh\|_{0,K_h^+},\quad \forall \vlh\in \VL.
\end{equation}
Let $K_h^*$ be the region enclosed by the interface $\G_K=\G\cap K$  and the approximation~$\G_{K,h}$. According to Assumption \ref{ass:intersect}, each curve is of class $C^2$, then  
$
|K_h^*|\lesssim h^3.
$
The polygon $K_h^+$ is an approximation to $K^+$. It holds that 
\begin{equation}\label{f2}
|(\fl, \vlh)_{K_h^+} -  (\fl, \vlh)_{K^+}|\le \int_{K_h^*}| \fl\vlh |\dx \lesssim h\|\fl\|_{2,K_h^+}\|\vlh\|_{K_h^+, h}.
\end{equation}
A combination of \eqref{f1} and \eqref{f2} leads to
\begin{equation} \label{fest}
|(\fl, \vlh)_{\Oml} - (\fl, \vlh)_{\Oml,h}|\le \sum_{K\cap \Oml\neq \emptyset} | (\fl, \vlh)_{K\cap \Oml} - (\fl,\vlh)_{K_h^+,h}|\lesssim h\|\vlh\|_{\Oml,h},
\end{equation} 
which leads to the estimate of the first term in the third inequality of \eqref{consistency}. A similar analysis proves the second term in the third inequality and also the second inequality of \eqref{consistency}.



By the definition of the bilinear form  \eqref{abdef}, the quadrature formula  \eqref{abquad2} and the integration by parts,
\begin{equation}\label{a1}
\begin{aligned}
&a(\sigmar,\ul;\tauh,\vlh) -a_h(\sigmar,\ul;\tauh,\vlh)
\\
=&\frac{1}{\betar}\left((\sigmar,\tauh)_{\Omr}  - (\sigmar,\tauh)_{\Omr,h} \right)
+ \left((\betal \nabla \ul, \nabla \vlh)_{\Oml} - (\betal \nabla \ul, \nabla \vlh)_{\Oml,h}\right)
\\
&- \left(\langle \tauh\cdot \bmn, \ul\rangle_{\G}  - \langle \tauh\cdot \bmn, \ul\rangle_{\G,h} \right)
+ \left( \langle \sigmar\cdot \bmn, \vlh\rangle_{\G}   -  \langle \sigmar\cdot \bmn, \vlh\rangle_{\G,h}  \right)
\end{aligned}
\end{equation}
For any element $K$ intersecting with $\G$, by a  direct application of the integration by parts and the fact that $|K_h^*|\lesssim h^3$, it holds that
$$
|\langle \tauh\cdot \bmn, \ul \rangle_{\G_K} - \langle \tauh\cdot \bmn, \ul \rangle_{\G_{K,h}} |= |\int_{K_h^*}\ul\Div\tauh + \tauh:\nabla \ul\dx|\lesssim h(\|\Div\tauh\|_{0,K_h^-} + \|\tauh\|_{0,K_h^-}).
$$
A summation of the estimate above on all intersecting elements gives  
$$
|\langle \tauh\cdot \bmn, \ul\rangle_{\G}  - \langle \tauh\cdot \bmn, \ul\rangle_{\G,h}|
\lesssim h \interleave (\tauh,\vlh)\interleave_{1,h}.
$$
A similar analysis leads to  $|\langle \sigmar\cdot \bmn, \vlh\rangle_{\G}  - \langle \sigmar\cdot \bmn, \vlh\rangle_{\G,h} |\lesssim h\interleave (\tauh,\vlh)\interleave_{1,h}$,  and also the last inequality of \eqref{consistency}. By a similar analysis of \eqref{fest}, the summation of the first four terms on the right-hand side of \eqref{a1} are also of $\mathcal{O}(h)$. A substitution of these estimates into \eqref{a1} yields that
\begin{equation}\label{a2}
|a(\sigmar,\ul;\tauh,\vlh) -a_h(\sigmar,\ul;\tauh,\vlh)|\lesssim h\interleave (\tauh,\vlh)\interleave_{1,h},
\end{equation} 
which proves the first inequality in \eqref{consistency}, and completes the proof. 
\end{proof}

 \begin{remark}
The consistency error estimate \eqref{consistency} require the pointwise value of $\sigmar$, $\taur$, $\ur$, $\ul$, $\vl$ and $\fr$ since quadrature schemes are used. By the Sobolev space embedding theorem, the pointwise value of a function is well defined if the function belongs to $H^{1+\varepsilon}(\Om^s)(\varepsilon>0)$, which is not the case for the lowest order DiFEM. However, thanks to \eqref{quadcondition}, the bilinear forms in \eqref{consistency} of functions in $H^1(\Om^s)$ can be defined as  equivalent inner products to the one computed by quadrature scheme. Thus, the consistency error estimate \eqref{consistency} is still well defined for functions in $H^1(\Om^s)$.
\end{remark}

A combination of the boundedness in Lemma~\ref{lm:bdd},  the inf-sup condition and the approximation property in Lemma~\ref{lm:commuting}, and the consistency error estimates in Lemma~\ref{lm:quadrature} leads to the optimal convergence of the lowest-order DiFEM in the following theorem.

\begin{theorem}\label{th:con}
If Assumption \ref{ass:intersect} holds, there exists a unique solution  $(\sigmah,\urh,\ulh)\in \VRT\times \VP\times \VL$ of the DiFEM \eqref{discrete} equipped with quadrature schemes \eqref{quad11}-\eqref{quad13}  in conforming finite spaces \eqref{conformspaces} with  \eqref{shapefunc}, and 
$$
\interleave (\sigmar-\sigmah,\ul-\ulh)\interleave_{1,h} + \interleave\ur-\urh\interleave_{0,h}\lesssim h,
$$
provided that $u\in H^2(\Oml\cup \Omr)\cap H^1(\Om)$.
\end{theorem}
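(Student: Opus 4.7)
The plan is to establish the theorem by verifying the four conditions of Assumption \ref{ass:error} for this concrete lowest-order choice, and then invoking the abstract convergence result Theorem \ref{th:wellposedirect} with $k=1$. Happily, every ingredient has been prepared in the three lemmas of Section \ref{sec:linear}, so the proof will be largely a matter of assembly, with brief remarks on how Assumption \ref{ass:intersect} feeds into each piece.

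First I would invoke Lemma \ref{lm:bdd} for the boundedness condition (A1) on $a_h(\cdot,\cdot)$ and $b_h(\cdot,\cdot)$ with respect to the discrete norms in \eqref{normh}. Next I would invoke Lemma \ref{lm:commuting}, which under Assumption \ref{ass:intersect} simultaneously delivers both the discrete inf-sup condition (A2) and the approximation property (A3) with $k=1$. The key point here, and where most of the novelty sits, is the design of the modified interpolation $\PiRTnew$: it agrees with the canonical Raviart-Thomas interpolation on edges interior to $\Omr$, but on the cut edges $e_\ell^i$ for $i\ge 2$ its degrees of freedom are redefined via \eqref{interpolationdef} precisely so that the commuting property $\Div\PiRTnew\taur=\Pih\Div\taur$ is preserved across interface elements. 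Given this, the standard Fortin-type argument of choosing $\taur\in H^1(\Omr)$ with $\Div\taur=\vrh$ and testing against $(\PiRTnew\taur,0)$, as done in \eqref{infsupbdd}, yields (A2), while the approximation bound on the right-hand side of \eqref{interpolationerror} yields (A3).

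Third, for the quadrature-consistency bounds (A4) with $k=1$, I would cite Lemma \ref{lm:quadrature}, whose proof uses the exactness property \eqref{quadcondition} on the polynomial degrees present in $S^+_u$, $S^-_u$, and $S^-_\sigma$, together with the geometric estimate $|K_h^*|\lesssim h^3$ coming from the $C^2$ regularity of $\Gamma$ stipulated in Assumption \ref{ass:intersect}. With (A1)-(A4) now all in place, a direct application of Theorem \ref{th:wellposedirect} furnishes both the well-posedness of the discrete problem \eqref{discrete} and the optimal bound
\[
\interleave (\sigmar-\sigmah,\ul-\ulh)\interleave_{1,h} + \interleave\ur-\urh\interleave_{0,h}\lesssim h,
\]
under the regularity assumption $u\in H^2(\Oml\cup\Omr)\cap H^1(\Om)$.

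The main obstacle is not in this final assembly but rather in the upstream work already carried out in Lemmas \ref{lm:commuting} and \ref{lm:quadrature}: showing that $\PiRTnew$ is $L^2$-stable on cut interface elements where $|K_h^-|$ can degenerate requires the geometric lower bound $|K_h^-|\gtrsim h^4$ of Assumption \ref{ass:intersect} to absorb the scaling factor $|K|^{1/2}/|K_h^-|^{1/2}$ appearing in \eqref{bdd:interpolation2}, and the analogous control is what makes the $\mathcal{O}(h)$ consistency estimate \eqref{a2} survive the discrepancy between $K^+$ and $K_h^+$. Once these geometric facts are granted, the theorem itself is essentially a clean corollary of the abstract framework of Section \ref{sec:DiFEM}.
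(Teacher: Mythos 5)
Your proposal matches the paper's argument exactly: the paper proves Theorem \ref{th:con} by combining the boundedness from Lemma \ref{lm:bdd} (condition (A1)), the inf-sup condition and approximation property from Lemma \ref{lm:commuting} (conditions (A2)--(A3)), and the quadrature consistency estimates from Lemma \ref{lm:quadrature} (condition (A4)), and then invoking the abstract result of Theorem \ref{th:wellposedirect} with $k=1$. Your additional remarks on where Assumption \ref{ass:intersect} enters are accurate and consistent with the paper's treatment.
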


\section{Numerical Examples}\label{sec:numerical}

This section presents several numerical tests to illustrate the performance of the proposed DiFEM.

\subsection{Example 1}
Let the domain $\Om$ be the square $(0,1)^2$, and the interface  $\G:=\{(x,y): \phi(x,y)=0.5 + 0.2\sin(\pi x)-y=0,\ 0<x<1\}$. Consider the problem
$$
\begin{aligned} 
&- \nabla \cdot(\beta\nabla u)= f & \mbox{ in }\Omega=\Oml\cup \Omr,\\ 
\label{jumpcon}
&[u]=0,\quad \left[\beta\nabla u\cdot\bmn\right]=0 & \mbox{ across }\Gamma,\\
\label{bdcon}
&u|_{\G_D}= g_D, \qquad  {\partial u\over \partial n}|_{\G_N}= g_N,   & 
\end{aligned}
$$
with $\G_D=\{(0,y): 0<y<1\}\cup \{(1,y): 0<y<1\}\cup \{(x,0): 0<x<1\}$ and $\G_N=\{(x,1):0<x<1\}$. Let $\Omr$ be the region above the interface, and $\Oml$ be the region below. 
The source term $f$ and the boundary conditions $g_D$ and $g_N$ are determined by the exact solution
$$
u=\left\{
\begin{aligned}
\cos(\pi x)\cos(\pi y)\phi(x,y)& \qquad \mbox{if}\ \phi(x,y)\le 0
\\
\betar \cos(\pi x)\cos(\pi y)\phi(x,y)&\qquad \mbox{if}\ \phi(x,y)>0
\end{aligned}\right.
$$
with $\betal=1$ and various $\betar$.
Let $\cT_0$ be the triangulation consisting of two right triangles obtained by cutting the unit square with a north-east line. Each triangulation $\cT_i$ is refined into a half-sized triangulation uniformly, to get a higher level triangulation~$\cT_{i+1}$.

As shown in Figure \ref{fig:ex1} for different $\betar/ \betal$, the relative errors  $\|\nabla(\ul-\ulh)\|_{0,\Omlh}$, $\|\sigmar-\sigmah\|_{0,\Omrh}$ and  $\|\ur-\urh\|_{0,\Omrh}$ of solutions by the DiFEM converge at the rate 1.00, and those of $\| \ul-\ulh\|_{0,\Omlh}$ converge at the rate 2.00, which coincide with the convergence result in Theorem \ref{th:con}. Note that the convergence rate does not deteriorate  even the ratios $\frac{\betar}{\betal}$ or $\frac{\betal}{\betar}$ are large, and verifies the  efficiency of the proposed DiFEM.


\begin{figure}[H]
\setlength{\abovecaptionskip}{0pt}
\setlength{\belowcaptionskip}{0pt}
\centering  
\includegraphics[width=6cm]{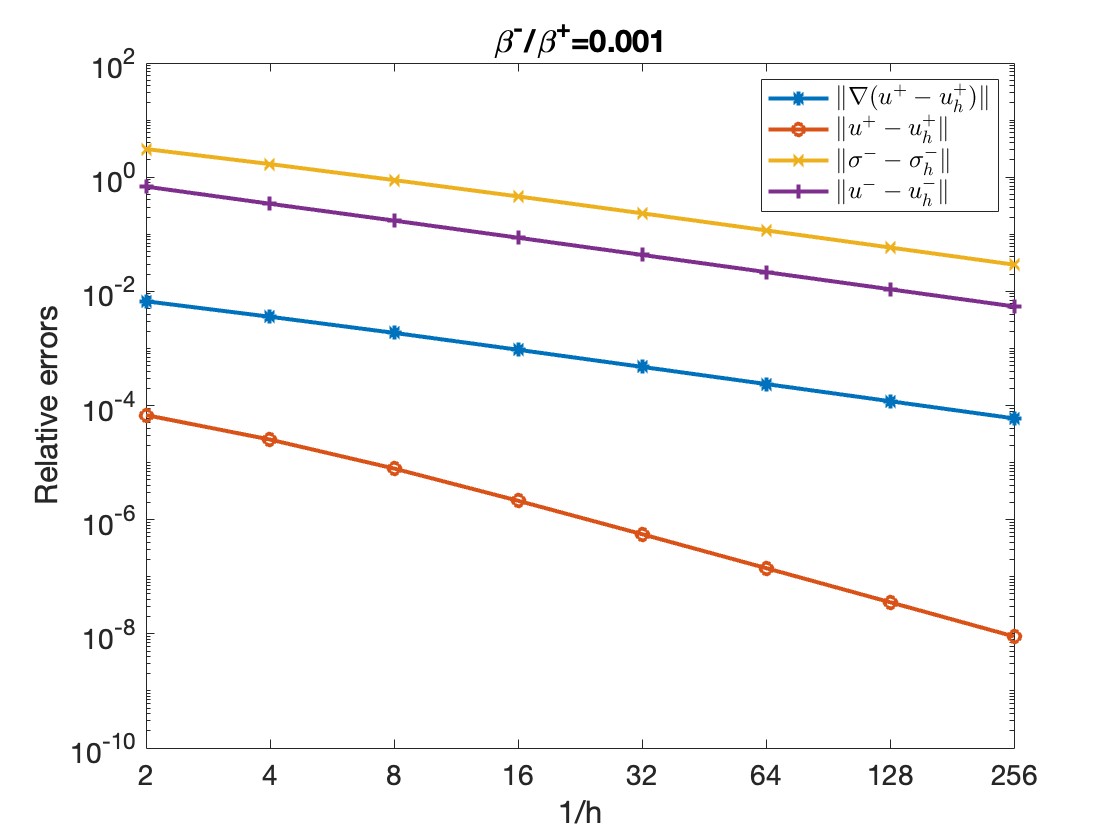}
\includegraphics[width=6cm]{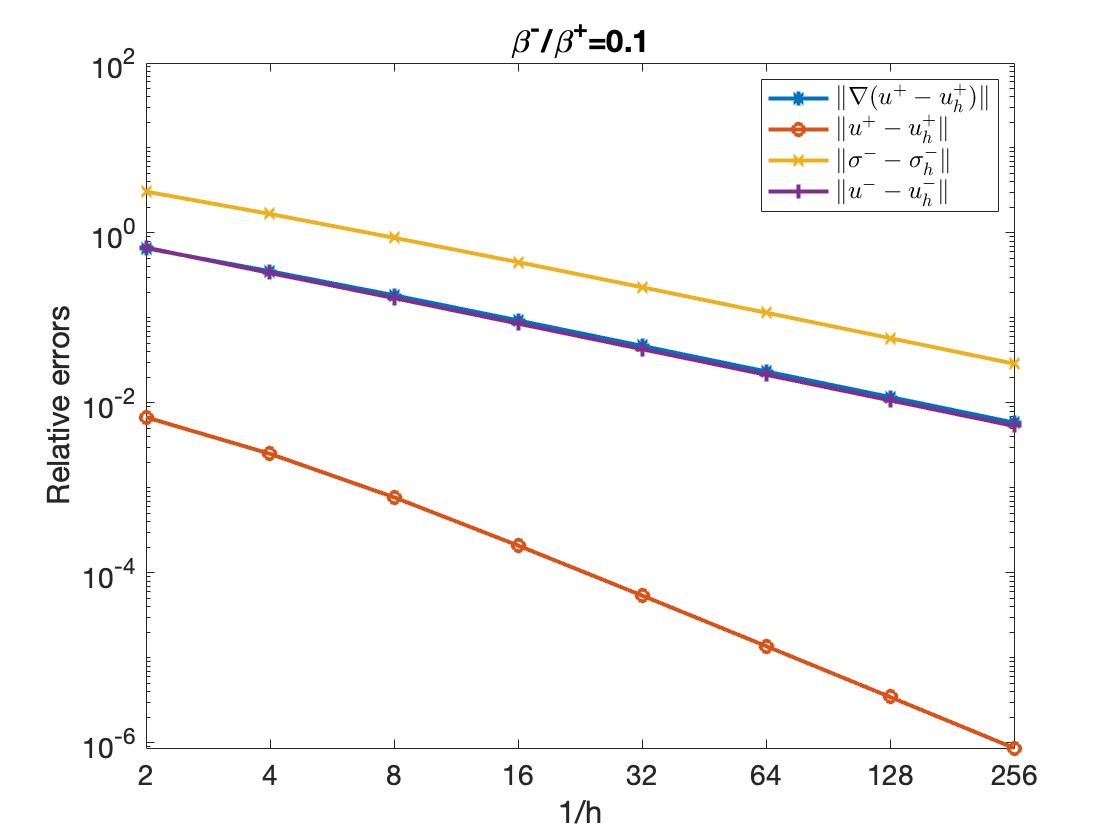}
\includegraphics[width=6cm]{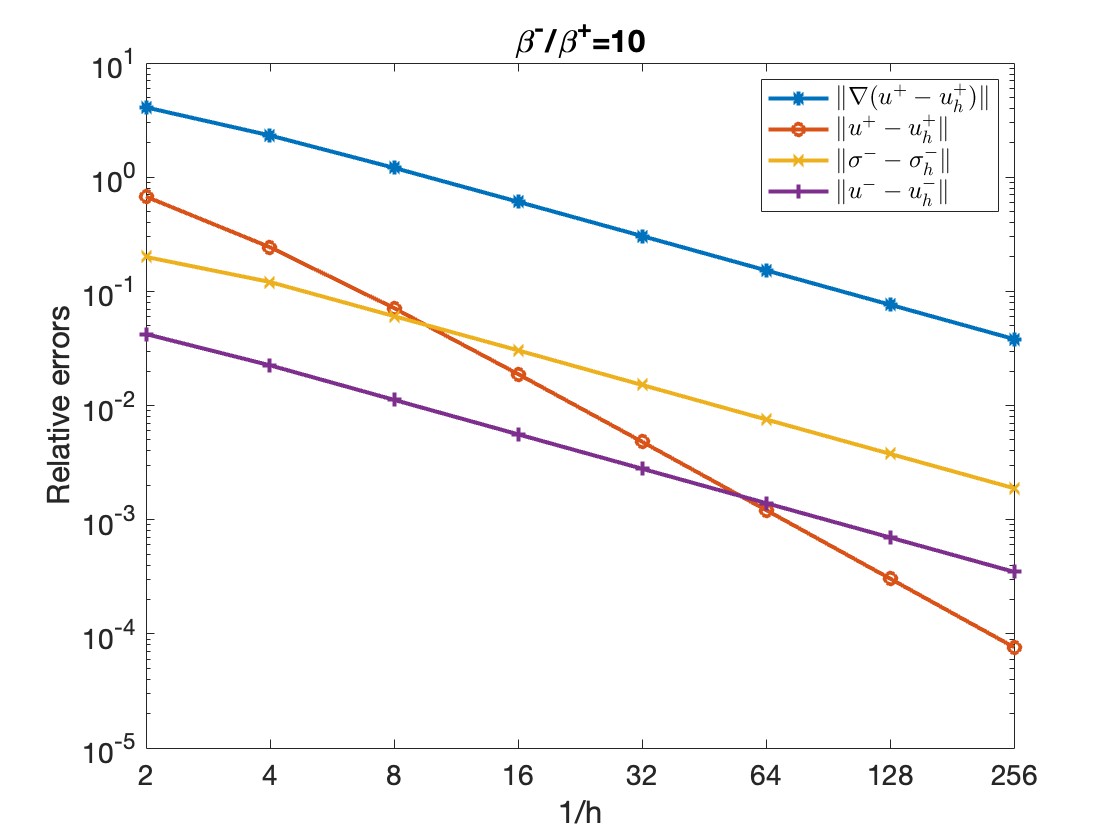}
\includegraphics[width=6cm]{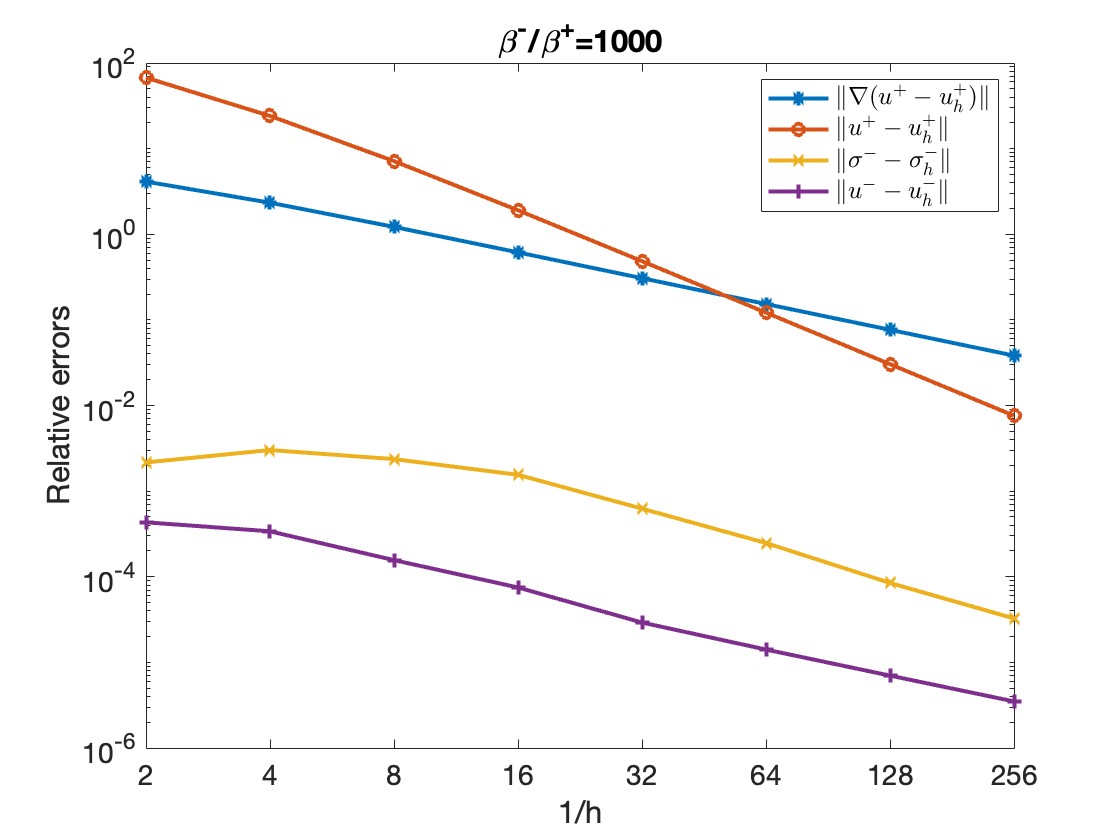}
\caption{\small Relative errors for Example 1 with different choice of $\frac{\betar}{\betal}$.}
\label{fig:ex1}
\end{figure}

\subsection{Example 2}
Let the domain $\Om$ be the square $(-2,2)^2$, and the interface be a circle centered at the origin $(0,0)$ with radius $r=1.1$. Let $\cT_0$ be the same triangulation in Example 1, and $\cT_2$ be the initial triangulation satisfying the requirement that the interface each edge at most once, which is depicted in Figure \ref{fig:circles}(a).
%
\begin{figure}[!ht]
\begin{center}
\begin{tikzpicture}[xscale=2,yscale=2]
\draw[-] (0,0) -- (2,0);
\draw[-] (2,0) -- (2,2);
\draw[-] (0,2) -- (2,2);
\draw[-] (0,0) -- (0,2);
\draw[-] (0,0) -- (2,2); 
\draw[-] (0,1) -- (1,2);
\draw[-] (1,0) -- (2,1);
\draw[-] (0,1) -- (2,1);
\draw[-] (1,0) -- (1,2);
\draw[-] (0,0.5) -- (1.5,2);
\draw[-] (0,1.5) -- (0.5,2);
\draw[-] (0.5,0) -- (2,1.5);
\draw[-] (1.5,0) -- (2,0.5);
\draw[-] (0.5,0) -- (0.5,2);
\draw[-] (1.5,0) -- (1.5,2);
\draw[-] (0,0.5) -- (2,0.5);
\draw[-] (0,1.5) -- (2,1.5);
\draw[line width=1.5pt] (1,1) circle (0.55); 
\end{tikzpicture}\qquad\qquad
\begin{tikzpicture}[xscale=2,yscale=2]
\draw[-] (0,0) -- (2,0);
\draw[-] (2,0) -- (2,2);
\draw[-] (0,2) -- (2,2);
\draw[-] (0,0) -- (0,2);
\draw[-] (0,0) -- (2,2); 
\draw[-] (0,1) -- (1,2);
\draw[-] (1,0) -- (2,1);
\draw[-] (0,1) -- (2,1);
\draw[-] (1,0) -- (1,2);
\draw[-] (0,0.5) -- (1.5,2);
\draw[-] (0,1.5) -- (0.5,2);
\draw[-] (0.5,0) -- (2,1.5);
\draw[-] (1.5,0) -- (2,0.5);
\draw[-] (0.5,0) -- (0.5,2);
\draw[-] (1.5,0) -- (1.5,2);
\draw[-] (0,0.5) -- (2,0.5);
\draw[-] (0,1.5) -- (2,1.5);
\draw[line width=1.5pt] (0.6,1) circle (0.38); 
\draw[line width=1.5pt] (1.4,1) circle (0.38); 
\end{tikzpicture}
\end{center} 
\caption{\footnotesize{The interfaces in Example 1(left) and Example 2
(right).}}
\label{fig:circles}
\end{figure}
Let $\Omr$ be the region enclosed by the circle, and $\Oml$ be the region outside the circle. Consider the interface problem \eqref{source} with $\betal=1$ and various $\betar$, and the right-hand side~$f$ and the boundary condition~$g$ are computed such that the exact solution 
$$
u=\left\{
\begin{aligned}
e^{x^2+y^2-r^2} + \betar r^2-1& \qquad \mbox{if}\ x^2+y^2\le r^2
\\
\betar (x^2+y^2)&\qquad \mbox{if}\ x^2+y^2> r^2
\end{aligned}\right.
$$
satisfying the continuity conditions in \eqref{source}. 

Table \ref{tab:1m3} - \ref{tab:1p3} record the relative errors 
$\|\nabla(\ul-\ulh)\|_{0,\Omlh}$, $\| \ul-\ulh\|_{0,\Omlh}$, $\|\sigmar-\sigmah\|_{0,\Omrh}$ and  $\|\ur-\urh\|_{0,\Omrh}$, and the convergence rate of solutions by the DiFEM when $\betar=10^{-3}$, $10^{-1}$, $1$, $10$ and $10^3$, respectively. These results coincide with the convergence result in Theorem \ref{th:con}.
It is pointed out in \cite{zhang2016coefficient} that it is inappropriate to use conforming finite element methods for large jump-coefficient problems because of the coefficient-dependent error estimate bound. For the DiFEM \eqref{discrete} coupling the conforming finite element method and the mixed finite element method, it shows surprisedly that the convergence rate of $\|\sigmar-\sigmah\|_{0,\Omrh}$ and   $\|\ur-\urh\|_{0,\Omrh}$ on $\Omr$ are even higher than one when the ratio $\frac{\betar}{\betal}$ is relatively large as indicated in Table \ref{tab:1p3}, while 
$\|\nabla(\ul-\ulh)\|_{0,\Omlh}$ remains of accuracy $\mathcal{O}(h)$. 

\begin{table}[!ht]
  \centering
  \caption{Relative errors and convergence rates for Example 2 with $\frac{\betar}{\betal}=0.001$.}
    \begin{tabular}{c|cc|cc|cc|cc}
\hline
          & $\|\nabla(\ul-\ulh)\|_{0,\Omlh}$ &   rate    & $\|\ul-\ulh\|_{0,\Omlh}$  & rate    &   $\|\sigmar-\sigmah\|_{0,\Omrh}$&  rate     &   $\|\ur-\urh\|_{0,\Omrh}$&   rate  \\\hline
 $\cT_2$ & 3.99E-03 &       & 1.09E-04 &       & 4.65E-01 &       & 1.01E+00 &  \\\hline
     $\cT_3$ & 1.43E-03 & 1.48  & 4.95E-04 & -2.18 & 2.93E-01 & 0.67  & 9.69E-01 & 0.05 \\\hline
     $\cT_4$ & 6.97E-04 & 1.04  & 3.86E-04 & 0.36  & 2.11E-01 & 0.47  & 4.39E-01 & 1.14 \\\hline
     $\cT_5$ & 3.31E-04 & 1.07  & 9.43E-05 & 2.03  & 1.10E-01 & 0.94  & 1.88E-01 & 1.22 \\\hline
     $\cT_6$ & 1.64E-04 & 1.01  & 2.35E-05 & 2.00  & 5.87E-02 & 0.91  & 8.00E-02 & 1.23 \\\hline
     $\cT_7$ & 8.20E-05 & 1.00  & 6.01E-06 & 1.97  & 2.98E-02 & 0.98  & 3.76E-02 & 1.09 \\\hline
    \end{tabular}%
  \label{tab:1m3}%
\end{table}%

\begin{table}[!ht]
  \centering
  \caption{Relative errors and convergence rates for Example 2 with $\frac{\betar}{\betal}=0.1$.}
    \begin{tabular}{c|cc|cc|cc|cc}
\hline
          & $\|\nabla(\ul-\ulh)\|_{0,\Omlh}$ &   rate    & $\|\ul-\ulh\|_{0,\Omlh}$  & rate    &   $\|\sigmar-\sigmah\|_{0,\Omrh}$&  rate     &   $\|\ur-\urh\|_{0,\Omrh}$&   rate  \\\hline
     $\cT_2$ & 3.10E-01 &       & 1.09E-02 &       & 3.40E-01 &       & 1.00E+00 &  \\\hline
     $\cT_3$ & 1.21E-01 & 1.35  & 4.47E-02 & -2.04 & 2.47E-01 & 0.46  & 8.74E-01 & 0.20 \\\hline
     $\cT_4$ & 5.93E-02 & 1.03  & 2.48E-02 & 0.85  & 1.79E-01 & 0.47  & 2.80E-01 & 1.64 \\\hline
     $\cT_5$ & 2.85E-02 & 1.06  & 5.92E-03 & 2.06  & 9.51E-02 & 0.91  & 1.18E-01 & 1.25 \\\hline
     $\cT_6$ & 1.42E-02 & 1.01  & 1.47E-03 & 2.01  & 5.07E-02 & 0.91  & 5.03E-02 & 1.23 \\\hline
     $\cT_7$ & 7.08E-03 & 1.00  & 3.77E-04 & 1.97  & 2.58E-02 & 0.98  & 2.37E-02 & 1.09 \\\hline
    \end{tabular}%
  \label{tab:1m1}%
\end{table}%
          
\begin{table}[!ht]
  \centering
  \caption{Relative errors and convergence rates for Example 2 with $\frac{\betar}{\betal}=1$.}
    \begin{tabular}{c|cc|cc|cc|cc}
\hline
          & $\|\nabla(\ul-\ulh)\|_{0,\Omlh}$ &   rate    & $\|\ul-\ulh\|_{0,\Omlh}$  & rate    &   $\|\sigmar-\sigmah\|_{0,\Omrh}$&  rate     &   $\|\ur-\urh\|_{0,\Omrh}$&   rate  \\\hline
     $\cT_2$ & 2.24E-01 &       & 9.77E-02 &       & 4.75E-02 &       & 1.88E-01 &  \\\hline
     $\cT_3$ & 1.12E-01 & 1.00  & 2.88E-02 & 1.76  & 3.34E-02 & 0.51  & 3.16E-02 & 2.57 \\\hline
     $\cT_4$ & 5.55E-02 & 1.01  & 6.71E-03 & 2.10  & 1.86E-02 & 0.85  & 1.31E-02 & 1.28 \\\hline
     $\cT_5$ & 2.77E-02 & 1.00  & 1.66E-03 & 2.02  & 9.92E-03 & 0.90  & 5.66E-03 & 1.21 \\\hline
     $\cT_6$ & 1.38E-02 & 1.00  & 4.24E-04 & 1.97  & 5.04E-03 & 0.98  & 2.68E-03 & 1.08 \\\hline
    \end{tabular}%
  \label{tab:1p0}%
\end{table}%

\begin{table}[!ht]
  \centering
  \caption{Relative errors and convergence rates for Example 2 with $\frac{\betar}{\betal}=10$.}
    \begin{tabular}{c|cc|cc|cc|cc}
\hline
          & $\|\nabla(\ul-\ulh)\|_{0,\Omlh}$ &   rate    & $\|\ul-\ulh\|_{0,\Omlh}$  & rate    &   $\|\sigmar-\sigmah\|_{0,\Omrh}$&  rate     &   $\|\ur-\urh\|_{0,\Omrh}$&   rate  \\\hline
$\cT_2$ & 4.96E-01 &       & 3.47E-01 &       & 5.73E-02 &       & 3.05E-01 &  \\\hline
     $\cT_3$ & 2.26E-01 & 1.13  & 9.88E-02 & 1.81  & 1.12E-02 & 2.36  & 3.51E-02 & 3.12 \\\hline
     $\cT_4$ & 1.13E-01 & 1.00  & 2.87E-02 & 1.78  & 3.72E-03 & 1.59  & 9.40E-03 & 1.90 \\\hline
     $\cT_5$ & 5.63E-02 & 1.01  & 6.65E-03 & 2.11  & 2.16E-03 & 0.78  & 2.26E-03 & 2.06 \\\hline
     $\cT_6$ & 2.81E-02 & 1.00  & 1.63E-03 & 2.02  & 1.07E-03 & 1.01  & 6.93E-04 & 1.70 \\\hline
     $\cT_7$ & 1.40E-02 & 1.00  & 4.17E-04 & 1.97  & 5.20E-04 & 1.05  & 2.85E-04 & 1.28 \\\hline
    \end{tabular}%
   \label{tab:1p1}%
\end{table}%

\begin{table}[!ht]
  \centering
  \caption{Relative errors and convergence rates for Example 2 with $\frac{\betar}{\betal}=1000$.}
    \begin{tabular}{c|cc|cc|cc|cc}
\hline
          & $\|\nabla(\ul-\ulh)\|_{0,\Omlh}$ &   rate    & $\|\ul-\ulh\|_{0,\Omlh}$  & rate    &   $\|\sigmar-\sigmah\|_{0,\Omrh}$&  rate     &   $\|\ur-\urh\|_{0,\Omrh}$&   rate  \\\hline
     $\cT_2$ & 8.56E-01 &       & 2.70E-01 &       & 7.81E-03 &       & 1.67E-01 &  \\\hline
     $\cT_3$ & 2.38E-01 & 1.85  & 9.65E-02 & 1.48  & 1.25E-03 & 2.65  & 4.25E-02 & 1.98 \\\hline
     $\cT_4$ & 1.14E-01 & 1.06  & 2.89E-02 & 1.74  & 1.95E-04 & 2.68  & 9.62E-03 & 2.15 \\\hline
     $\cT_5$ & 5.74E-02 & 0.99  & 7.01E-03 & 2.05  & 5.78E-04 & -1.57 & 3.02E-03 & 1.67 \\\hline
     $\cT_6$ & 2.84E-02 & 1.02  & 1.66E-03 & 2.08  & 9.84E-05 & 2.55  & 5.55E-04 & 2.44 \\\hline
     $\cT_7$ & 1.41E-02 & 1.01  & 4.18E-04 & 1.99  & 3.47E-05 & 1.50  & 1.32E-04 & 2.07 \\\hline
    \end{tabular}%
   \label{tab:1p3}%
\end{table}%

\subsection{Example 3} 
This example tests the effectivity of the proposed DiFEM for  \eqref{discrete} on the unit square $(0,1)^2$ with multiple interfaces as depicted in Figure \ref{fig:circles}(right). In this case, the interface is the union of two closely located circles with radius $r=0.19$ and centers $(0.3,0.5)$ and $(0.7,0.5)$, respectively. Let $\Omr$ be the region enclosed by the two circles and $\Oml$ be the region outside the circles.
Compute the right-hand side $f$ and the boundary condition $g$ with $\betal=1$ such that the exact solution 
$$
u=\left\{
\begin{aligned}
\frac{1}{\betar}\phi(x)&\qquad \mbox{if}\ x\in \Omr
\\
\phi(x)& \qquad \mbox{if }\ x\in \Oml
\end{aligned}\right. ,
$$
where $\phi(x)=((x-0.3)^2+(y-0.5)^2-r^2)((x-0.7)^2+(y-0.5)^2-r^2)$.
Take the same triangulation as in Example 1 and let $\cT_2$ be the initial triangulation.

Table \ref{tab:2p0} - \ref{tab:2p2} record the relative errors $\|\nabla(\ul-\ulh)\|_{0,\Omlh}$, $\| \ul-\ulh\|_{0,\Omlh}$, $\|\sigmar-\sigmah\|_{0,\Omrh}$ and  $\|\ur-\urh\|_{0,\Omrh}$, and the convergence rate of solutions by the DiFEM when $\betar=1$ and $100$, respectively. It shows that the proposed DiFEM is effective even when the interface is the union of closely located curves. The results in Table \ref{tab:2p0} - \ref{tab:2p2} coincide with the convergence result in Theorem \ref{th:con}, and the convergence rate does not deteriorate  even the ratios $\frac{\betar}{\betal}$ or $\frac{\betal}{\betar}$ are large.

\begin{table}[!ht]
  \centering
  \caption{Relative errors and convergence rates for Example 3 with $\frac{\betar}{\betal}=1$.}
    \begin{tabular}{c|cc|cc|cc|cc}
\hline
          & $\|\nabla(\ul-\ulh)\|_{0,\Omlh}$ &   rate    & $\|\ul-\ulh\|_{0,\Omlh}$  & rate    &   $\|\sigmar-\sigmah\|_{0,\Omrh}$&  rate     &   $\|\ur-\urh\|_{0,\Omrh}$&   rate  \\\hline
     $\cT_2$ & 4.66E-01 &       & 3.16E-01 &       & 6.16E-02 &       & 7.30E-02 &  \\\hline
     $\cT_3$ & 2.44E-01 & 0.94  & 8.45E-02 & 1.90  & 1.86E-02 & 1.72  & 2.62E-02 & 1.48 \\\hline
     $\cT_4$ & 1.23E-01 & 0.98  & 2.14E-02 & 1.98  & 8.90E-03 & 1.07  & 9.26E-03 & 1.50 \\\hline
     $\cT_5$ & 6.18E-02 & 1.00  & 5.38E-03 & 1.99  & 4.53E-03 & 0.97  & 3.95E-03 & 1.23 \\\hline
     $\cT_6$ & 3.09E-02 & 1.00  & 1.35E-03 & 2.00  & 2.31E-03 & 0.97  & 1.81E-03 & 1.13 \\\hline
    \end{tabular}%
  \label{tab:2p0}%
\end{table}%
          
\begin{table}[!ht]
  \centering
  \caption{Relative errors and convergence rates for Example 3 with $\frac{\betar}{\betal}=100$.}
    \begin{tabular}{c|cc|cc|cc|cc}
\hline
          & $\|\nabla(\ul-\ulh)\|_{0,\Omlh}$ &   rate    & $\|\ul-\ulh\|_{0,\Omlh}$  & rate    &   $\|\sigmar-\sigmah\|_{0,\Omrh}$&  rate     &   $\|\ur-\urh\|_{0,\Omrh}$&   rate  \\\hline
     $\cT_2$ & 5.01E-01 &       & 3.95E-01 &       & 7.35E-03 &       & 5.06E-02 &  \\\hline
     $\cT_3$ & 2.45E-01 & 1.03  & 8.48E-02 & 2.22  & 2.44E-03 & 1.59  & 2.25E-03 & 4.49 \\\hline
     $\cT_4$ & 1.23E-01 & 0.99  & 2.16E-02 & 1.98  & 4.55E-04 & 2.42  & 1.43E-03 & 0.65 \\\hline
     $\cT_5$ & 6.19E-02 & 1.00  & 5.41E-03 & 2.00  & 1.27E-04 & 1.84  & 4.72E-04 & 1.60 \\\hline
     $\cT_6$ & 3.10E-02 & 1.00  & 1.35E-03 & 2.00  & 5.61E-05 & 1.18  & 1.34E-04 & 1.82 \\\hline
    \end{tabular}%
  \label{tab:2p2}%
\end{table}%

\subsection{Example 4}

This example tests the effectivity of the proposed DiFEM for  \eqref{discrete} on the square~$(-1,1)^2$ with the interface being a flower, where the corresponding level set function is defined as 
$$
r=\frac12 - 2^{\sin (5\theta)-3}.
$$
Let $\Oml=\{(r,\theta): r>\frac12 - 2^{\sin (5\theta)-3}\}$ and $\Omr=\{(r,\theta): r<\frac12 - 2^{\sin (5\theta)-3}\}$.
Compute the right-hand side $f$ and the boundary condition $g$ such that the exact solution 
\begin{equation}\label{exactu}
u(r,\theta)= \left\{
\begin{aligned}
\frac{1}{\betar}r^2(r - \frac12 + 2^{\sin (5\theta)-3})&\qquad \mbox{if}\ (r,\theta)\in \Omr,
\\
\frac{1}{\betal}r^2(r - \frac12 + 2^{\sin (5\theta)-3})& \qquad \mbox{if }\ (r,\theta)\in \Oml
\end{aligned}\right. ,
\end{equation} 
Take the same triangulation as in Example 1 and let $\cT_1$ be the initial triangulation.

\begin{figure}[!ht]
\begin{center}
\begin{tikzpicture}[xscale=2,yscale=2]
\draw[-] (-1,-1) -- (1,-1);
\draw[-] (1,-1) -- (1,1);
\draw[-] (1,1) -- (-1,1);
\draw[-] (-1,-1) -- (-1,1);
\draw[-] (-1,0) -- (1,0);
\draw[-] (0,-1) -- (0,1);
\draw[-] (-1,-1) -- (1,1);
\draw[-] (-1,0) -- (0,1);
\draw[-] (0,-1) -- (1,0);
\draw[domain=0:360,samples=500,line width=1.5pt] plot (\x:{0.5-2^(sin(5*\x)-3)});
\end{tikzpicture}
\end{center}
\caption{The interface for Example 3. }
\label{fig:flower}
\end{figure}

As shown in Figure \ref{fig:flower}, the region enclosed by the interface is no longer convex. 
Table \ref{tab:3p0} - \ref{tab:3p1} record the relative errors $\|\nabla(\ul-\ulh)\|_{0,\Omlh}$, $\| \ul-\ulh\|_{0,\Omlh}$, $\|\sigmar-\sigmah\|_{0,\Omrh}$ and  $\|\ur-\urh\|_{0,\Omrh}$, and the convergence rate of solutions by the direct  finite element method when $\betar=1$ and $10$, respectively. The results in Table \ref{tab:3p0} - \ref{tab:3p1} also verify the convergence result in Theorem \ref{th:con}.

\begin{table}[!ht]
  \centering
  \caption{Relative errors and convergence rates for Example 3 with $\frac{\betar}{\betal}=1$.}
    \begin{tabular}{c|cc|cc|cc|cc}
\hline
          & $\|\nabla(\ul-\ulh)\|_{0,\Omlh}$ &   rate    & $\|\ul-\ulh\|_{0,\Omlh}$  & rate    &   $\|\sigmar-\sigmah\|_{0,\Omrh}$&  rate     &   $\|\ur-\urh\|_{0,\Omrh}$&   rate  \\\hline
     $\cT_1$ & 3.89E-01 &       & 2.27E-01 &       & 2.24E-02 &       & 2.31E-02 &  \\\hline
     $\cT_2$ & 2.02E-01 & 0.95  & 5.92E-02 & 1.94  & 1.76E-02 & 0.35  & 1.04E-02 & 1.16 \\\hline
     $\cT_3$ & 1.03E-01 & 0.97  & 1.60E-02 & 1.89  & 9.09E-03 & 0.95  & 2.66E-03 & 1.96 \\\hline
     $\cT_4$ & 5.17E-02 & 0.99  & 3.84E-03 & 2.06  & 4.50E-03 & 1.01  & 1.24E-03 & 1.10 \\\hline
     $\cT_5$ & 2.59E-02 & 1.00  & 9.63E-04 & 1.99  & 2.22E-03 & 1.02  & 5.16E-04 & 1.26 \\\hline 
    \end{tabular}%
  \label{tab:3p0}%
\end{table}%

\begin{table}[!ht]
  \centering
  \caption{Relative errors and convergence rates for Example 3 with $\frac{\betar}{\betal}=10$.}
    \begin{tabular}{c|cc|cc|cc|cc}
\hline
          & $\|\nabla(\ul-\ulh)\|_{0,\Omlh}$ &   rate    & $\|\ul-\ulh\|_{0,\Omlh}$  & rate    &   $\|\sigmar-\sigmah\|_{0,\Omrh}$&  rate     &   $\|\ur-\urh\|_{0,\Omrh}$&   rate  \\\hline
     $\cT_1$ & 3.89E-01 &       & 2.26E-01 &       & 1.13E-02 &       & 1.19E-02 &  \\\hline
     $\cT_2$ & 2.02E-01 & 0.95  & 5.87E-02 & 1.95  & 5.80E-03 & 0.97  & 2.24E-03 & 2.41 \\\hline
     $\cT_3$ & 1.03E-01 & 0.97  & 1.56E-02 & 1.91  & 1.24E-03 & 2.23  & 3.43E-04 & 2.71 \\\hline
     $\cT_4$ & 5.17E-02 & 0.99  & 3.83E-03 & 2.02  & 5.26E-04 & 1.23  & 2.55E-04 & 0.43 \\\hline
     $\cT_5$ & 2.59E-02 & 1.00  & 9.61E-04 & 1.99  & 2.41E-04 & 1.13  & 7.86E-05 & 1.70 \\\hline
    \end{tabular}%
  \label{tab:3p1}%
\end{table}%


\bibliographystyle{plain} 
\bibliography{ref}
\end{document}